\documentclass[11pt, a4paper]{article}

\usepackage[english]{babel}
\usepackage{eucal}
\usepackage[T1]{fontenc}

\usepackage{amsmath}
\usepackage{amssymb}
\usepackage{amsthm}

\usepackage{tikz}
\usepackage{tikz-cd}

\usepackage{hyperref}
\usepackage[capitalize]{cleveref}
\usepackage{footmisc}
\usepackage{bbm}
\usepackage{mathrsfs}
\usepackage{extarrows}
\usepackage{enumitem}
\usepackage{stmaryrd}
\usepackage{microtype}

\newcommand{\fin}{\mathrm{fin}}

\newcommand{\gl}{\mathrm{gl}}

\DeclareMathOperator{\res}{res}

\DeclareMathOperator{\TMF}{TMF}

\DeclareMathOperator{\CAlg}{CAlg}

\DeclareMathOperator{\Cat}{Cat}

\DeclareMathOperator{\Fun}{Fun}

\newcommand{\Glo}{\mathcal{G}}

\DeclareMathOperator{\Span}{Span}

\DeclareMathOperator{\Sp}{Sp}

\newcommand{\spaces}{\mathcal{S}}

\DeclareMathOperator{\Out}{Out}

\newcommand{\Fin}{\mathbb{F}}

\DeclareMathOperator{\Aut}{Aut}

\DeclareMathOperator{\laxlim}{laxlim}
\newcommand{\id}{\mathrm{id}}

\DeclareMathOperator{\Hom}{Hom}

\newcommand{\op}{\mathrm{op}}

\newcommand{\bbG}{\mathbb{G}}
\newcommand{\bbL}{\mathbb{L}}

\newcommand{\E}{\mathbf{E}}

\newcommand{\bbE}{\mathbb{E}}

\newcommand{\calF}{\mathcal{F}}

\newcommand{\calG}{\mathcal{G}}

\newcommand{\h}{\mathrm{h}}

\newcommand{\calN}{\mathcal{N}}

\newcommand{\bbO}{\mathbb{O}}

\newcommand{\Q}{\mathbf{Q}}

\newcommand{\bbN}{\mathbb{N}}

\newcommand{\Z}{\mathbf{Z}}

\newcommand{\calC}{\mathcal{C}}

\newcommand{\calE}{\mathcal{E}}

\newcommand{\bbZ}{\mathbb{Z}}

\newcommand{\bbF}{\mathbb{F}}

\newcommand{\bbC}{\mathbb{C}}
\newcommand{\bbR}{\mathbb{R}}
\newcommand{\calO}{\mathcal{O}}
\newcommand{\calW}{\mathcal{W}}

\newcommand{\bbI}{\mathbb{I}}

\newcommand{\family}{\mathcal{F}}

\newcommand{\norm}{\mathrm{norm}}

\newcommand{\calI}{\mathcal{I}}

\newcommand{\dcat}{\mathcal{D}}
\newcommand{\ccat}{\mathcal{C}}

\newcommand{\spherespectrum}{\mathbf{S}}

\newcommand{\bs}{{-}}
\newcommand{\ul}[1]{\underline{#1}}

\newcommand{\Grpd}{\bbG}
\newcommand{\UCom}{\mathrm{UCom}}
\newcommand{\Comm}{\mathrm{Comm}}

\newcommand{\inert}{\mathrm{int}}

\DeclareMathOperator{\Ar}{Ar}

\DeclareMathOperator{\Corr}{Corr}

\DeclareMathOperator{\Orb}{\mathcal{O}}

\usepackage{mathtools}

\newtheorem*{theorem*}{Theorem}

\theoremstyle{plain}\numberwithin{equation}{section}
\newtheorem{theorem}[equation]{Theorem}
\Crefname{theorem}{{Th}.\!\!}{{Ths}.\!\!}
\newtheorem{theoremalph}{Theorem}

\Crefname{theoremalph}{{Th}.\!\!}{{Ths}.\!\!}
\Crefname{coralph}{{Cor}.\!\!}{{Cors}.\!\!}

\Crefname{defalph}{{Df}.\!\!}{{Dfs}.\!\!}

\Crefname{conjalph}{{Conj}.\!\!}{{Conjs}.\!\!}

\Crefname{problem}{{Prb}.\!\!}{{Prbs}.\!\!}
\newtheorem{prop}[equation]{Proposition}
\Crefname{prop}{{Pr}.\!\!}{{Prs}.\!\!}
\newtheorem{lemma}[equation]{Lemma}
\Crefname{lemma}{{Lm}.\!\!}{{Lms}.\!\!}

\Crefname{cor}{{Cor}.\!\!}{{Cors}.\!\!}

\Crefname{conjecture}{{Conj}.\!\!}{{Conjs}.\!\!}

\theoremstyle{definition}\numberwithin{equation}{section}
\newtheorem{mydef}[equation]{Definition}
\Crefname{mydef}{{Df}.\!\!}{{Dfs}.\!\!}

\Crefname{defn}{{Df}.\!\!}{{Dfs}.\!\!}

\Crefname{recall}{{Rcl}.\!\!}{{Rcls}.\!\!}

\Crefname{construction}{{Con}.\!\!}{{Cons}.\!\!}

\Crefname{ass}{{As}.\!\!}{{As}.\!\!}

\Crefname{notation}{{Nt}.\!\!}{{Nts}.\!\!}

\Crefname{situation}{{St}.\!\!}{{Sts}.\!\!}

\theoremstyle{remark}\numberwithin{equation}{section}
\newtheorem{example}[equation]{Example}
\Crefname{example}{{Ex}.\!\!}{{Exs}.\!\!}
\newtheorem{ex}[equation]{Example}
\Crefname{ex}{{Ex}.\!\!}{{Exs}.\!\!}

\Crefname{nonexample}{{NonEx}.\!\!}{{NonEx}.\!\!}

\Crefname{claim}{{Clm}.\!\!}{{Clms}.\!\!}
\newtheorem{remark}[equation]{Remark}
\Crefname{remark}{{Rmk}.\!\!}{{Rmks}.\!\!}

\Crefname{rmk}{{Rmk}.\!\!}{{Rmks}.\!\!}

\Crefname{idea}{{Id}.\!\!}{{Ids}.\!\!}

\Crefname{warn}{{Warn}.\!\!}{{Warns}.\!\!}

\Crefname{question}{{Qn}.\!\!}{{Qns}.\!\!}

\Crefname{figure}{{Fig.}\!\!}{{Figs.}\!\!}
\Crefname{footnote}{{Fn.}\!\!}{{Fn.}\!\!}

\Crefname{part}{{\textsection}\!\!}{{\textsection}\!\!}
\Crefname{chapter}{{\textsection}\!\!}{{\textsection}\!\!}
\Crefname{section}{{\textsection}\!\!}{{\textsection}\!\!}
\Crefname{subsection}{{\textsection}\!\!}{{\textsection}\!\!}
\Crefname{appendix}{{\textsection}\!\!}{{\textsection}\!\!}

\usepackage{parskip}

\makeatletter
\pretocmd{\remark}{\vspace{-\parskip}}{}{}
\pretocmd{\example}{\vspace{-\parskip}}{}{}
\makeatother

\begingroup
    \makeatletter
    \@for\theoremstyle:=mydef,remark,plain,theorem,defn\do{%
        \expandafter\g@addto@macro\csname th@\theoremstyle\endcsname{%
            \addtolength\thm@preskip\parskip
            }%
        }
\endgroup

\usetikzlibrary{spath3}
\tikzset{between/.style n args={2}{/tikz/spath/at end path construction={
    \tikzset{spath/split at keep middle={current}{#1}{#2}}
}}}

\begin{document}
\title{An algebraic model for rational ultracommutative rings}

\author{William Balderrama\footnote{\url{williamb@math.uni-bonn.de}}, Jack Morgan Davies\footnote{\url{davies@uni-wuppertal.de}}, and Sil Linskens\footnote{\url{sil.linskens@mathematiks.uni-regensburg.de}}}
\maketitle

\begin{abstract}
Given a global equivariant ultracommutative ring spectrum $E$ and inclusion $H\hookrightarrow G$ of finite groups, one may apply geometric fixed points to the norm $N_H^G E_H \to E_G$ to obtain what we call a \emph{geometric norm} $\Phi^H E \to \Phi^G E$. We prove that, together with inflations, these assemble into a functor $\Phi\colon \mathrm{UCom}_{\mathrm{fin}} \to \Fun(\Span(\mathcal{G},\mathcal{E},\mathcal{O}),\mathrm{CAlg})$, where $\Span(\mathcal{G},\mathcal{E},\mathcal{O})$ is the span category of finite connected groupoids with full backwards maps and faithful forwards maps, and that $\Phi$ restricts to an equivalence between full subcategories of rational objects.

Central to our construction is a refinement of geometric fixed points to a natural transformation $\Phi\colon \Sp_\bullet\to\Fun(\Orb_\bullet^\simeq,\Sp)$ which is compatible with restrictions and norms, and which restricts to an equivalence on full subcategories of rational objects. We explain how this may also be used to recover theorems of Barrero--Barthel--Pol--Strickland--Williamson and Wimmer on algebraic models for rational global spectra and normed $G$-commutative ring spectra respectively. 
\end{abstract}

\setcounter{tocdepth}{2}
\tableofcontents

%%%%%%%%%%%%%%%%%%%%%%%%%%%%%%%%%%%%%%%%%%%%%%%%%%%%%%%%
%%%%%%%%%%%%%%%%%%%%%%%%%%%%%%%%%%%%%%%%%%%%%%%%%%%%%%%%
%%%%%%%%%%%%%%%%%%%%%%%%%%%%%%%%%%%%%%%%%%%%%%%%%%%%%%%%
\section{Introduction}

\emph{Global equivariant homotopy theory} is the study of equivariant phenomena in homotopy theory that occurs simultaneously for all groups in a compatible way. It has its origins in the observation that many fundamental examples of equivariant cohomology theories are global: equivariant $K$-theory, equivariant bordism, equivariant stable homotopy, Borel cohomology, and more. These are now well understood to be examples of \emph{global spectra}. We refer the reader to \cite{s} for a textbook account of global homotopy theory; we restrict ourselves to global homotopy theory for finite groups here.

Associated to a global spectrum $E$ is an underlying genuine $G$-spectrum $E_G$ for every finite group $G$, which as $G$ varies are related by compatible equivalences $\res^G_H E_G\simeq E_H$ for each inclusion $H \hookrightarrow G$ and inflations $\operatorname{infl}_K^G E_K \to E_G$ for each surjection $G \twoheadrightarrow K$. This structure endows the geometric fixed points $\Phi^GE = \Phi^G(E_G)$ of a global spectrum with inflations:
\[
G\twoheadrightarrow K\quad\leadsto\quad \Phi^K E \simeq \Phi^G(\operatorname{infl}_K^GE_K) \to \Phi^G E.
\]
Additional structure on an equivariant spectrum $E$ is reflected in additional structure on its geometric fixed points. For example, if $E$ is a global ring spectrum, then its geometric fixed points are ring spectra, and the above inflations $\Phi^K E \to \Phi^G E$ are multiplicative.

The fundamental examples of highly structured commutative rings in global homotopy theory are the \emph{ultracommutative rings} \cite[\textsection 5]{s}. Underlying an ultracommutative ring $E$ is a $G$-commutative ring $E_G$ in the sense of Hill--Hopkins--Ravenel \cite{hhr} for every finite group $G$. In particular, associated to each inclusion $H\hookrightarrow G$ is an equivariant norm map $N_H^G E_H \to E_G$. On geometric fixed points, these induce what we call \emph{geometric norms}: multiplicative maps
\[
H \hookrightarrow G \quad \leadsto\quad\Phi^H E \simeq \Phi^G(N_H^G E_H) \to \Phi^G E
\]
of $\E_\infty$ ring spectra. 
The goal of this article is to provide means of capturing this kind of additional structure on geometric fixed points, and to show that, rationally, it is everything.

In the case of ultracommutative rings, our main theorem is as follows.
Let $\calG\subset\spaces$ be the full subcategory of spaces spanned by the connected groupoids with finite isotropy groups (i.e.\ classifying spaces of finite groups), $\calE\subset\calG$ be the wide subcategory of full functors (i.e.\ $\pi_1$-surjections), and $\calO\subset\calG$ be the wide subcategory of faithful functors (i.e.\ $\pi_1$-injections).

\begin{theoremalph}\label{intro:main}
Geometric fixed points of ultracommutative rings assemble into a functor
\[
\Phi\colon \UCom_\fin \to \Fun(\Span(\calG,\calE,\calO),\CAlg),\qquad (\Phi E)(BG) = \Phi^G E,
\]
with functoriality of $\Phi E$ in $\calE^{\op}$ given by inflation and in $\calO$ given by geometric norms. This assignment restricts to an equivalence
\[
\UCom_{\fin,\Q} \simeq \Fun(\Span(\Glo,\calE,\calO),\CAlg_\Q)
\]
between full subcategories of rational objects.
\end{theoremalph}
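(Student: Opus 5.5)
We plan to deduce \cref{intro:main} from the refinement of geometric fixed points described in the abstract. That refinement is a natural transformation $\Phi\colon \Sp_\bullet\to\Fun(\Orb_\bullet^\simeq,\Sp)$ of functors out of (a span category of) finite groupoids. It is compatible with restrictions and norms, and is an equivalence on rational objects. The overall strategy is to pass both sides through a common formalism: normed algebras over a global (or parametrized) symmetric monoidal category.

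First, we identify $\UCom_\fin$ with the $\infty$-category of normed algebras in global spectra. Concretely, these are sections of the functor $\Sp_\bullet\colon \Span(\Fin_{\Grpd})\to\Cat$, with norms along faithful maps and restrictions along arbitrary maps, that are cocartesian in the appropriate sense. Second, we apply the same construction to $\Fun(\Orb_\bullet^\simeq,\Sp)$. For a groupoid $BG$, $\Orb_{BG}^\simeq$ is the groupoid of transitive $G$-sets, i.e.\ of pairs (subgroup up to conjugacy, Weyl action). Norms on this side are computed by left Kan extension along the induced maps on groupoids, using the pointwise smash product. A normed algebra there unwinds, value by value, to a commutative algebra $\Phi^H E$ for each group $H$, with structure maps indexed by spans of connected groupoids: backwards along full maps, coming from the inflation part of restriction, and forwards along faithful maps, coming from norms. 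The forwards maps carry the Weyl actions and the geometric norms. Verifying this identification with $\Fun(\Span(\Glo,\calE,\calO),\CAlg)$ is a combinatorial unstraightening argument. The key input is that every map of connected groupoids factors as full followed by faithful, essentially uniquely, and that pullbacks of faithful maps decompose into disjoint unions whose components reproduce the double coset formula that the span composition encodes. Since $\Phi$ is compatible with norms, it induces a functor on normed algebras, which yields the functor in the theorem with the stated values.

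For the rational equivalence, we use the general principle that a natural transformation of parametrized symmetric monoidal categories which is levelwise an equivalence induces an equivalence on normed algebras. Rationally, $\Phi$ is such a levelwise equivalence: at $BG$ it is the classical splitting $\Sp_{G,\Q}\simeq\prod_{(H)}\Fun(BW_GH,\Sp_\Q)$. We must also check that rational ultracommutative rings correspond to normed algebras in rational global spectra. This needs care, because rationalization does not preserve norms naively. However, $\Q$-localization is a smashing localization that is compatible with geometric norms, since $\Phi^G N_H^G$ of a rational spectrum is rational. So the rational objects form a sub-normed-category, and normed algebras in it coincide with rational normed algebras on both sides.

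The main obstacle is the first-step identification of normed algebras in $\Fun(\Orb_\bullet^\simeq,\Sp)$ with functors out of $\Span(\Glo,\calE,\calO)$. This is where the geometric content lies: norms of geometric fixed points along faithful maps only see the diagonal contributions (Hill--Hopkins--Ravenel's $\Phi^G N_H^G\simeq\Phi^H$). We would first prove this as a formal statement about left Kan extension along the functor $\Orb_\bullet^\simeq$. Then we would check, by a Beck--Chevalley computation for the spans, that compositions match.
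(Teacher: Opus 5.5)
Your architecture is the paper's. $\UCom_\fin$ is by definition the partially lax limit of $\Sp_\bullet$ over $(\Span(\bbG,\bbG,\bbO),\bbO^\op)$; one applies $\laxlim^\dagger\Phi$ and identifies the target. The rational statement then follows because $\Phi$ is a natural equivalence on $\Q$-local objects, and your treatment of that step is essentially the paper's.

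The gap is in the step you yourself flag as the main obstacle. You give no mechanism for it that would work, and you misplace where its difficulty lies. The identification
\[
\laxlim^\dagger_{(\Span(\bbG,\bbG,\bbO),\bbO^\op)}\Fun(\Orb_\bullet^\simeq,\ccat)\simeq\Fun(\Span(\Glo,\calE,\calO),\CAlg(\ccat))
\]
contains no geometry and holds for any symmetric monoidal $\ccat$. The Hill--Hopkins--Ravenel diagonal enters only in constructing $\Phi$, which you took as given. In the paper, $\Phi$ is the adjoint of a transformation $\Sp_\bullet\Rightarrow\ul{\Sp}^\otimes$ on $\Span(\bbG,\bbE,\bbG)$. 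That transformation is built from Yuan's extension of $\Sp_\bullet$ to $\Corr$ and the fixed-point functors $\bbF_X\to\bbF_{\pi_0 X}$, which commute with restriction along $\bbE$ and with all right Kan extensions.

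The missing idea is that $\Fun(\Orb_\bullet^\simeq,\ccat)$ is the \emph{right} Kan extension $\iota_\ast\ul{\ccat}^\otimes$ of $\ul{\ccat}^\otimes=\ccat^\otimes\circ\Span(\pi_0)$ along $\iota\colon\Span(\bbG,\bbE,\bbO)\subset\Span(\bbG,\bbG,\bbO)$, used together with factorization systems. The argument runs as follows.
\begin{enumerate}
\item The marked class $\bbO^\op$ is the left class of a factorization system on $\Span(\bbG,\bbG,\bbO)$. Its right class consists of spans whose backward leg has connected fibres.
\item This right class lies in $\Span(\bbG,\bbE,\bbO)$, where it is again a right class, now paired with the backward fold maps.
\item Write a partially lax limit as cocartesian-edge-preserving functors out of $\Ar_R$. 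Then the partially lax limit of $\iota_\ast\ul{\ccat}^\otimes$ equals that of $\ul{\ccat}^\otimes$ over $(\Span(\bbG,\bbE,\bbO),(\bbG^\nabla)^\op)$.
\item $\Span(\pi_0)\colon\Span(\bbG,\bbE,\bbO)\to\Span(\bbF)$ exhibits the cocartesian operad on $\Span(\Glo,\calE,\calO)$, with inerts the backward fold maps. The partially lax limit is therefore $\Fun(\Span(\Glo,\calE,\calO),\CAlg(\ccat))$.
\end{enumerate}

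Your suggested \emph{formal statement about left Kan extension along $\Orb_\bullet^\simeq$} points the wrong way. Lax limits are compatible with right Kan extensions, under exactly this factorization hypothesis, not with left ones. A ``combinatorial unstraightening'' does not by itself supply these coherences.

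Relatedly, no double coset formula appears in composition in $\Span(\Glo,\calE,\calO)$. Pulling a faithful map of connected groupoids back along a full one gives a connected groupoid. Double cosets arise only from the backward faithful maps in $\Span(\bbG,\bbG,\bbO)$. These are exactly the marked left-class edges, and the factorization argument absorbs them.
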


This resolves a conjecture of Wimmer in his thesis \cite[p.7]{wimmerthesis}, extending his description of rational global power functors. More generally, our proof establishes the corresponding theorem for $\calN$-normed $\family$-global rings, where $\family$ is a global family of finite groups and only participating primes need be inverted, and $\calN$ is a choice of group homomorphisms that one allows norms along; see \Cref{thm:rationalucomm}.

The category $\Fun(\Span(\Glo,\calE,\calO),\CAlg_\Q)$ is algebraic: as the mapping spaces in $\Span(\Glo,\calE,\calO)$ are rationally discrete, every functor $\Span(\Glo,\calE,\calO) \to \CAlg_\Q$ factors uniquely through the homotopy category $\Out^\op_{\norm} \coloneqq \h\Span(\Glo,\calE,\calO)$, and $\Fun(\Span(\Glo,\calE,\calO),\CAlg_\Q)$ is equivalent to the underlying $\infty$-category of the model category of $\Out^\op_\norm$-indexed diagrams in rational commutative DGAs. Our work thus fits into the larger program, championed by Greenlees and coauthors, of finding algebraic models for categories of rational equivariant spectra. We refer the reader to \cite{intro_algebraic_models} for an introduction to this subject. Our own interest in the geometric fixed points of ultracommutative rings arose from ongoing work on \emph{equivariant elliptic cohomology}, in particular the conjecture that equivariant $\TMF$ admits the structure of a global ultracommutative ring. In forthcoming work \cite{geometricnorms}, we construct geometric norms on equivariant $\TMF$ out of isogenies of oriented elliptic curves. When combined with \cref{intro:main}, this will show that rational equivariant $\TMF$ admits the structure of a global ultracommutative ring for finite groups.

Let us indicate the basic idea behind our proof of \cref{intro:main}. At its core is a new analysis of the interaction between geometric fixed points and norms. If $E$ is a $G$-spectrum and $H\subset G$, then the geometric fixed points $\Phi^H E$ carry a residual action of the Weyl group $W_HG$. As $H$ varies, this assembles into a functor $\Phi\colon \Sp_G \to \Fun(\calO_{BG}^\simeq,\Sp)$, where $\calO_{BG}$ is the orbit category of $G$. Classical localization theory shows $\Phi$ is an equivalence away from the order of $G$. In \cref{sec:geometric_fixed_points}, we assemble these functors into a natural transformation
\begin{equation*}\label{eq:phi}
\Phi\colon \Sp_\bullet \Rightarrow \Fun(\calO_\bullet^\simeq,\Sp)\colon \Span(\bbG) \to \Cat.
\end{equation*}
Here, $\Span(\bbG)$ is the span category of finite groupoids, and the diagrams $\Sp_\bullet$ and $\Fun(\calO_\bullet^\simeq,\Sp)$ on $\Span(\bbG)$ encode restriction and norm functors that may be defined for these categories. As we recall in \cref{sec:review}, work of the third-named author with Lenz and P\"utzst\"uck \cite{LLPNorms} has shown that the category of global ultracommutative rings may be constructed naturally from the diagram $\Sp_\bullet$ using the theory of \emph{partially lax limits}. Carrying out the same construction with $\Fun(\calO_\bullet^\simeq,\Sp)$ in place of $\Sp_\bullet$ provides a natural target for the geometric fixed points of an ultracommutative ring. We identify this target with the functor category $\Fun(\Span(\calG,\calE,\calO),\CAlg)$ in \cref{sec:funwithlaxlims}. This establishes the first half of \cref{intro:main}, and the rational equivalence quickly follows in \cref{sec:rational}.

These techniques apply more generally to capture additional structure present on geometric fixed points for other categories of equivariant spectra and equivariant ring spectra. We illustrate this in \cref{sec:variations} with two examples. The first is a linear version of \cref{intro:main}: a geometric fixed points functor
\[
\Phi\colon \Sp^\gl_\fin \to \Fun(\calE^\op,\Sp)
\]
which restricts to an equivalence between full subcategories of rational objects. We also treat the variations for $\family$-global spectra for other global families $\family$ of finite groups, where only participating primes need be inverted. This recovers and refines previous work of Barrero--Barthel--Pol--Strickland--Williamson \cite[Th.D]{Glo_reps}, Schwede \cite[Th.4.5.13]{s}, and Wimmer \cite[Th.3.2.20]{wimmerthesis}. 
Our second example is a $G$-equivariant analogue of \cref{intro:main}: a geometric fixed points functor
\[
\Phi\colon \Comm^G \to \Fun(\calO_{BG},\CAlg)
\]
which is an equivalence away from the order of $G$. Here, $\Comm^G$ is the category of genuine $G$-commutative, or normed, ring spectra, and this recovers a theorem of Wimmer \cite[Th.1.2]{wimmerrationalmodel}. Moreover, we treat the generalization to partially normed $G$-ring spectra which, in the course of preparing this paper, has also appeared independently in a preprint of Tigilauri \cite{tigilauri2026modelnormedalgebrasrational}.

%%%%%%%%%%%%%%%%%%%%%%%%%%%%%%%%%%%%%%%%%%%%%%%%%%%%%%%%
\subsection*{Notation and conventions}
All groups of equivariance are assumed to be finite. We refer to $\infty$-categories as categories, and all constructions are to be interpreted $\infty$-categorically. A subcategory $\ccat\subset\dcat$ is a monomorphism in $\Cat$.  We employ the following notation throughout the paper:

\begin{itemize}
\item $\bbG\subset\spaces$ is the category of finite groupoids and $\Glo\subset\bbG$ is the full subcategory of connected finite groupoids, i.e.\ of classifying spaces of finite groups.
\item $\bbO\subset\bbG$ and $\calO\subset\Glo$ are the wide subcategories of faithful functors. The orbit category of $X\in\bbG$ is $\calO_X = \calO\times_{\bbG}\bbO_{/X}$. In particular $\calO_{BG}$ is equivalent, by $(BH \hookrightarrow BG)  \leftrightsquigarrow  G/H$, to the usual orbit category of $G$, and in general $\calO_{X\amalg Y}\simeq \calO_X\amalg \calO_Y$ for $X,Y\in\bbG$.
\item $\calE\subset\Glo$ is the wide subcategory of full functors and $\bbE\subset\bbG$ is its finite coproduct cocompletion. A map $f\colon Y \to X$ of finite groupoids lies in $\bbE$ when it  induces a surjection $\pi_1(Y,y) \to \pi_1(X,f(y))$ for all $y\in Y$.
\item $\family$ is a global family: a class of finite groups closed under equivalence and subgroups.
\item $\bbG_\family\subset\bbG$ is the category of groupoids with isotropy in $\family$, and $\ccat_\family = \ccat\cap\bbG_\family$ for $\ccat\subset\bbG$.
\item $\calN\subset\Glo_\family$ is an $\family$-global choice of norms (see \cref{def:choiceofnorms}) and $\bbN\subset\bbG_\family$ is its finite coproduct completion.
\item $\bbF$ is the category of finite sets, and $\bbF_X = \Fun(X,\bbF)$ for $X \in \bbG$. We note that this is equivalent, by unstraightening, to $\bbO_{/X}$.
\item $\Span(\ccat,L,R)$ is the category of spans in a category $\ccat$ with backwards and forwards arrows in given wide subcategories $L,R\subset\ccat$ closed under pullback along each other; see e.g.\ \cite{HHLNa}. We abbreviate $\Span(\ccat) = \Span(\ccat,\ccat,\ccat)$, when defined.
\item $\CAlg(\ccat)$ is the category of $\E_\infty$ algebras in a given symmetric monoidal category $\ccat$. We abbreviate $\CAlg = \CAlg(\Sp)$.
\end{itemize}

As in the above, we will often make use of the finite coproduct completion $\bbC$ of a category $\ccat$, which is seen to be equivalent to the cartesian unstraightening of the functor $\ccat^{\times\bullet}\colon \bbF^\op \to \Cat$. We call the wide subcategory $\bbC^\nabla\subset\bbC$ of cartesian arrows the \emph{fold maps}. For example, $\bbG$ and $\bbF_{X}$ are the finite coproduct completions of $\Glo$ and $\calO_X$ respectively.

Finally, we will work with symmetric monoidal categories and more general operads as defined over $\Span(\bbF)$, rather than over $\bbF_\ast$ as is done in \cite{ha}; see \cite{envelopes} for the details of this translation.

%%%%%%%%%%%%%%%%%%%%%%%%%%%%%%%%%%%%%%%%%%%%%%%%%%%%%%%%
\subsection*{Acknowledgements}
JMD was supported by the DFG-funded research training group GRK 2240: Algebro-Geometric Methods in Algebra, Arithmetic and Topology. SL was an associate member of the SFB: Higher invariants. This work was supported by EPSRC grant no EP/K032208/1.

%%%%%%%%%%%%%%%%%%%%%%%%%%%%%%%%%%%%%%%%%%%%%%%%%%%%%%%%
%%%%%%%%%%%%%%%%%%%%%%%%%%%%%%%%%%%%%%%%%%%%%%%%%%%%%%%%
%%%%%%%%%%%%%%%%%%%%%%%%%%%%%%%%%%%%%%%%%%%%%%%%%%%%%%%%
\section{Review of equivariant and global homotopy theory}\label{sec:review}
We begin with a definition for the category of equivariant spectra.

\begin{mydef}
For a finite groupoid $X\in \bbG$, the category of \emph{$X$-spectra} is the category
\[
\Sp_X \coloneqq \Fun^{\times}(\Span(\mathbb{F}_X),\Sp)
\]
of finite product-preserving functors $\Span(\bbF_X)\to\Sp$.
\end{mydef}

\begin{remark}
By the Guillou--May theorem \cite{guilloumay2024models}, $\Sp_{BG}$ is equivalent to the category $\Sp^G$ of genuine $G$-spectra.
\end{remark}

The following results encode the functoriality of $\Sp_X$ employed in this article.

\begin{mydef}
Write $\Corr\subset \Cat$ for the subcategory spanned on objects by the categories $\bbF_X$ and on morphisms by the right adjoints.
\end{mydef}

\begin{lemma}\label{lem:corr}
There is a product-preserving functor
\[
\Span(\Grpd) \to \Corr, \qquad X\mapsto \bbF_X,
\]
sending a span $X\xleftarrow{f}M\xrightarrow{g}Y$ of finite groupoids to the composite $g_\ast f^\ast$ of restriction along $f$ and right Kan extension along $g$.
\end{lemma}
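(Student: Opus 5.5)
The plan is to obtain the functor from the universal property of the span category, using the identification $\bbF_X \simeq \bbO_{/X}$. Under this identification, restriction $f^\ast\colon \bbF_X \to \bbF_M$ along $f\colon M\to X$ is pullback of faithful maps, $(P\to X)\mapsto (P\times_X M\to M)$. The pullback of a faithful functor is again faithful, and in spaces the homotopy pullback of finite groupoids is a finite groupoid, so this is well defined. The right Kan extension $g_\ast\colon \bbF_M\to\bbF_Y$ along $g\colon M\to Y$ is computed pointwise as $(g_\ast F)(y) = \lim_{M_y} F$, with $M_y$ the fibre of $g$ at $y$, a finite groupoid. This is a limit of finite sets indexed by a finite groupoid, hence a finite set, so $g_\ast$ lands in $\bbF_Y$. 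It is right adjoint to $g^\ast$, so both $f^\ast$ (right adjoint to left Kan extension $f_!$, which also preserves finiteness as $f_!F(x)=\operatorname{colim}_{M_x}F$) and $g_\ast$ are morphisms of $\Corr$.

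To assemble these into a functor out of $\Span(\Grpd)$, I would use the universal property of span categories (Barwick, or \cite{HHLNa}). A functor $\Grpd^{\op}\to\Corr$, $X\mapsto\bbF_X$, $f\mapsto f^\ast$, extends to $\Span(\Grpd)$ provided two conditions hold: each $g^\ast$ admits a right adjoint $g_\ast$ (which is still in $\Corr$, the subcategory containing right adjoints), and the Beck--Chevalley condition holds for pullback squares in $\Grpd$. Concretely, for a pullback square $M'=M\times_Y Y'$ the mate $h^\ast g_\ast \to g'_\ast f'^\ast$ must be an equivalence. The underlying functor $X\mapsto \Fun(X,\bbF)$, $f\mapsto f^\ast$, is simply the restriction of $\Fun(-,\bbF)\colon \spaces^{\op}\to\Cat$, so it is coherent for free.

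Beck--Chevalley is the main point, and it is where working with spaces rather than 1-groupoids matters: squares are homotopy pullbacks. Evaluated at $y'\in Y'$, both sides compute $\lim_{F}(-)$ over the fibre. On one side the fibre is $M_{h(y')}$, and on the other it is $M'_{y'}$, and these are equivalent because the square is a pullback. This is the standard base change for pointwise right Kan extensions along maps of $\infty$-groupoids, since Kan extension along a map of spaces is computed on fibres. The only care needed is checking that the canonical mate agrees with this identification, which follows from the pointwise formula.

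For product preservation, the products in $\Span(\Grpd)$ are disjoint unions $X\amalg Y$, and in $\Corr\subset\Cat$ we need $\bbF_{X\amalg Y}\simeq\bbF_X\times\bbF_Y$. This holds since $\Fun(X\amalg Y,\bbF)=\Fun(X,\bbF)\times\Fun(Y,\bbF)$. One should also check that $\Corr$ is closed under these products, i.e. that the projections are right adjoints, which they are (restriction along the inclusions $X\to X\amalg Y$). The empty groupoid goes to the terminal category. I expect the coherence and the Beck--Chevalley verification to be the only nontrivial step; the rest is bookkeeping.
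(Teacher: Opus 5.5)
Your proposal is correct and follows essentially the same route as the paper, which simply cites Barwick's unfurling construction: a functor $X\mapsto\bbF_X$, $f\mapsto f^\ast$, whose restriction functors have right adjoints satisfying base change, extends to spans. You additionally spell out the hypotheses the paper leaves implicit, namely finiteness of $g_\ast F$ via limits over the finite fibres $M_y$, the Beck--Chevalley equivalence $h^\ast g_\ast\simeq g'_\ast f'^\ast$ from the fibrewise formula, and the identification $\bbF_{X\amalg Y}\simeq\bbF_X\times\bbF_Y$ giving product preservation.
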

\begin{proof}
This is an application of Barwick's unfurling construction; see \cite[Ex.3.4]{HHLNa}.
\end{proof}

\begin{theorem}[Yuan]\label{ex:sp_bullet}
There exists a product-preserving functor 
\[
\Sp_\bullet\colon \Corr\to \Cat,\quad X\mapsto \Sp_X,
\]
whose restriction to $\Span(\bbG)$ sends:
\begin{enumerate}
\item A backwards map $X \xleftarrow{f} M$ to the restriction $f^\ast\colon \Sp_X \to \Sp_M$;
\item A faithful forwards map $BH\to BG$ to the multiplicative norm $N_H^G\colon \Sp^H \to \Sp^G$;
\item A full forwards map $BG \xrightarrow{g}BK$ to the geometric fixed points $\Phi^{\ker(\pi_1g)}\colon \Sp^G \to \Sp^K$.
\end{enumerate} 
\end{theorem}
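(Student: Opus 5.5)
The plan is to build $\Sp_\bullet$ as the composite of two functorial constructions on $\Corr$: first the Burnside-type construction $\bbF_X\mapsto \Span(\bbF_X)$, then $\Fun^\times(-,\Sp)$ made covariant by left Kan extension. A morphism of $\Corr$ is a right adjoint $R\colon \bbF_X\to\bbF_Y$ between categories with finite limits. Such an $R$ preserves pullbacks and finite products, so it induces a functor $\Span(R)\colon\Span(\bbF_X)\to\Span(\bbF_Y)$. Since $\Span(\bbF_X)$ is semiadditive and products there are the disjoint unions of $\bbF_X$, we must check that $\Span(R)$ preserves finite products. Coproducts in $\bbF_X$ are disjoint and universal and $R$ preserves pullbacks, so $R$ preserves finite coproducts exactly when it preserves the initial object. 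For the right Kan extensions $g_\ast$ arising from \cref{lem:corr} this holds because they are computed by products over fibres; these are empty-product-safe since $g_\ast\emptyset$ is a product of copies of $\emptyset$ over nonempty fibres, which is initial. In general it is the content of restricting to this subcategory $\Corr$.

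Next, I would pass to spectra by left Kan extension: $\Fun^\times(\Span(\bbF_X),\Sp)$ is the stabilization of $\Fun^\times(\Span(\bbF_X),\spaces)$, and a product-preserving functor $F$ between semiadditive categories induces a left adjoint $F_!$ on product-preserving presheaves. Since $(-)^{\op}$ fixes $\Span$, covariance is automatic. Functoriality in $\Corr$ then comes from Lurie's straightening of the $\infty$-functor $\Span(\bbF_\bullet)\colon\Corr\to\Cat^{\times}$, composed with the functor $\ccat\mapsto \Fun^\times(\ccat,\Sp)$ with left Kan extensions. The latter is a functor $\Cat^{\times}\to\Cat$ given, for instance, by $\mathcal{P}_\Sigma$ followed by $\Sp\otimes-$ in $\Pr^L$. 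Product-preservation of $\Sp_\bullet$ follows because $\bbF_{X\amalg Y}\simeq\bbF_X\times\bbF_Y$, $\Span$ preserves products, and $\Fun^\times(\ccat\times\dcat,\Sp)\simeq\Fun^\times(\ccat,\Sp)\times\Fun^\times(\dcat,\Sp)$ when both categories are semiadditive.

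Finally, I would identify the three kinds of maps. For (1), $f^\ast$ on $\bbF$ has both adjoints, and the left Kan extension along $\Span(f^\ast)$ agrees with precomposition along $\Span(f_\ast)$ by the adjunction $\Span(f^\ast)\dashv\Span(f_\ast)$. This is the usual restriction of Mackey functors, hence of $G$-spectra via Guillou--May. For (2), $g_\ast$ along $BH\to BG$ is coinduction of finite $H$-sets $S\mapsto\mathrm{Map}_H(G,S)$. Left Kan extension of this is the HHR/Bachmann--Hoyois norm, by the universal characterization of the norm as the unique symmetric monoidal, sifted-colimit-preserving extension sending $\Sigma^\infty_+ S$ to $\Sigma^\infty_+\mathrm{Map}_H(G,S)$. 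For (3), $g_\ast$ along $BG\to BK$ with kernel $N$ is $S\mapsto S^N$, and left Kan extension sends $\Sigma^\infty_+S\mapsto\Sigma^\infty_+S^N$ and preserves colimits. This characterizes $\Phi^N$ viewed as a $G/N$-spectrum.

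The main obstacle is the coherence: making $\ccat\mapsto\Fun^\times(\ccat,\Sp)$ a genuine $\infty$-functor on product-preserving functors, and verifying the norm identification, which requires matching with the symmetric-monoidal characterization rather than a formula. For the former I would appeal to the universal property of $\mathcal{P}_\Sigma$ of a category with coproducts; for the latter I would compare on representables and use that both sides preserve sifted colimits and are symmetric monoidal.
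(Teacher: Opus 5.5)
The paper does not prove this statement itself. It cites Yuan's Theorem~2.18 and only observes that passing to compact objects identifies Yuan's $\mathrm{Glo}^+$ with $\Corr$. You are therefore reconstructing Yuan's theorem, and your construction fails exactly at the norms, which is the part the rest of the paper needs.

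Your pipeline (apply $\Span(R)$, then $\mathcal{P}_\Sigma$, then $\Sp\otimes-$ in $\mathrm{Pr}^L$) is functorial only in finite-coproduct-preserving functors. It can only output colimit-preserving, hence additive and zero-preserving, functors of spectra. Norms are neither. For instance $N_e^{C_2}(E\oplus F)\simeq N_e^{C_2}E\oplus N_e^{C_2}F\oplus (C_2)_+\wedge E\wedge F$. Likewise, the forward map $\emptyset\to\ast$ should go to the unit $\spherespectrum$, whereas a left Kan extension out of $\Sp_\emptyset\simeq\ast$ gives $0$. So no functor built this way satisfies (2). Your step (2) tacitly switches to a sifted-colimit-preserving extension, which is not what your $\mathrm{Pr}^L$ construction produces.

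The error enters in your first paragraph, in two places:
\begin{enumerate}
\item A pullback-preserving functor that preserves $\emptyset$ need not preserve binary coproducts. Coinduction $S\mapsto \mathrm{Map}_H(G,S)$ is a counterexample: for $e\subset C_2$ it is $S\mapsto S\times S$.
\item $g_\ast\emptyset$ is \emph{terminal}, not initial, over points with empty fibre, since it is an empty product there.
\end{enumerate}
In fact $g_\ast$ preserves finite coproducts if and only if every fibre of $g$ is nonempty and connected. So your argument reaches restrictions and geometric fixed points, but not norms. Restricting to right adjoints does not rescue it: the span $X\leftarrow\emptyset\to Y$ already lands on the constant functor at the terminal object.

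What is missing is a stabilization that is functorial in non-additive functors. The $\mathcal{P}_\Sigma$ step is fine for arbitrary functors, being the free sifted cocompletion. After it, one must, as Bachmann--Hoyois do:
\begin{enumerate}
\item invert representation spheres among symmetric monoidal categories with sifted colimits;
\item check that every morphism of $\Corr$ sends those spheres to invertible objects;
\item identify the result with $\Fun^\times(\Span(\bbF_X),\Sp)$, with the norms and with $\Phi^N$.
\end{enumerate}
Doing all of this coherently over $\Corr$ is the content of the cited theorem.

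There is also a smaller error in (1). For faithful $f$, left Kan extension along $\Span(f^\ast)$ is precomposition with $\Span(f_!)$, i.e.\ induction, not with $\Span(f_\ast)$. The claimed adjunction $\Span(f^\ast)\dashv\Span(f_\ast)$ fails, and precomposing with coinduction is not restriction.
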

\begin{proof}
This is \cite[Theorem 2.18]{integralmodelsyuan}, upon observing that passing to compact objects provides an equivalence from Yuan's category $\mathrm{Glo}^+$ to our category $\Corr$.
\end{proof}

\begin{remark}
Restricted to $\Span(\bbG)$, the functor $\Sp_\bullet$ of \cref{ex:sp_bullet} was constructed in \cite{normsmotivic}, using an $\infty$-categorical construction of the norm $N_H^G\colon \Sp^H \to \Sp^G$. In \cite[Th.8.9]{LLPNorms}, these functors are shown to coherently agree with the Hill--Hopkins--Ravenel point-set model of the norm \cite{hhr}. The additional functoriality of $\Sp_\bullet$ in $\Corr$ encodes further relations present between geometric fixed points and restrictions, such as $\Phi^G\circ q^\ast\simeq\id$ where $q\colon BG \to \ast$ is the projection. Remembering this relation will be crucial to our treatment of the naturality of geometric fixed points in \cref{sec:geometric_fixed_points}.
\end{remark}

Various categories of global spectra and ultracommutative rings may be constructed by taking suitable partially lax limits built from the functor $\Sp_\bullet$. Let us recall this notion.

\begin{mydef}
A \emph{marked category} $(\calI,\calW)$ is a category $\calI$ equipped with a collection of morphisms $\calW\subset\calI$. Given a marked category $(\calI,\calW)$ and a functor $F\colon \calI \to \Cat$, the \emph{partially lax limit}
\[
\underset{(\calI,\calW)}{\laxlim^\dagger} F
\]
is the full subcategory of $\Fun_\calI(\calI,\smallint F)$ of sections of the cocartesian unstraightening of $F$ which send maps in $\calW$ to cocartesian edges.
\end{mydef}

This allows us to define the main characters of this article.

\begin{mydef}
The category of \emph{$\calF$-global spectra} is the partially lax limit
\[\Sp^\gl_\family \coloneqq \underset{(\Glo_\calF^\op,\calO_\calF^\op)}{\laxlim^\dagger}\Sp_\bullet.\]
\end{mydef}

\begin{remark}
By \cite[Th.11.10, Rk.\ on p.\ 1382]{denissilluca}, $\Sp^\gl_\family$ is equivalent to the underlying $\infty$-category of Schwede's category of $\family$-global spectra \cite{s}.
\end{remark}

\begin{mydef}\label{def:choiceofnorms}
An \emph{$\family$-global choice of norms} is a wide subcategory $\calN\subset\Glo_\family$ whose finite coproduct cocompletion $\bbN\subset\bbG_\family$ is closed under base change. The category of \emph{$\calN$-normed $\family$-global rings} is the partially lax limit
\[
\UCom_\family^\calN \coloneqq \underset{(\Span(\bbG_\calF,\bbG_\calF,\bbN),\bbO_\calF^\op)}{\laxlim^\dagger}\Sp_\bullet,
\]
where $\bbO_\calF^\op\subset \bbG_\family^\op\subset\Span(\bbG_\calF,\bbG_\calF,\bbN)$ is the class of backwards arrows that lie in $\bbO_\calF$.
\end{mydef}

Note that as $\Glo_\family\subset\bbG_\family$ is closed under pullbacks along maps in $\calE_\family$, the same is true of the subcategory $\calN\subset\Glo_\family$.

\begin{example}\label{ex:ucom}
When $\calN = \calO_\family$ is the class of faithful morphisms, $\UCom_\family \coloneqq \UCom_\family^{\calO_\family}$ is the category of \emph{ultracommutative $\family$-global rings}. Taking $\calF = \fin$ to be the family of all finite groups, \cite[Th.B]{LLPNorms} proves that $\UCom_\fin$ is equivalent to Schwede's category of $\fin$-global ultracommutative ring spectra \cite[\textsection5]{s}.
\end{example}

\begin{example}\label{ex:nonorms}
When $\calN = \Glo^\simeq_\family$ is the minimal choice of norms, $\UCom_\family^{\calN} \simeq \CAlg(\Sp_\family^\gl)$ is equivalent to the category of $\E_\infty$ rings in $\Sp_\family^\gl$ (see \cref{lem:nonorms}).
\end{example}

\begin{example}
When $\calN = \Glo$ is the maximal choice of norms, $\UCom_\fin^{\Glo}$ is the category of global commutative ring spectra with multiplicative deflations considered by Blumberg--Mandell--Yuan \cite[Df.6.4]{blumbergmandellyuan2023relative}.
\end{example}

\begin{example}
In \cite[Df.2.3]{Barrero_2023}, Barrero introduces the notion of a \emph{global $\family$-transfer system}. Every global $\family$-transfer system $\leq_T$ gives rise to a $\family$-global choice of norms $\calN(T)$ for which $\bbN(T)\subset\bbG_\family$ is the wide subcategory of faithful morphisms $f\colon Y \to X$ satisfying $\pi_1(Y,y) \leq_T \pi_1(X,f(y))$ for all $y \in Y$.  In light of \cref{ex:ucom}, it seems reasonable to expect that the category of $\calN(T)$-normed $\family$-global rings is modeled by a homotopy theory of algebras in orthogonal spectra for a particular global $N_\infty$ operad associated to $\leq_T$ in the sense of \cite{Barrero_global_ninfty}.
\end{example}
We end this section by making good on \cref{ex:nonorms}.

\begin{lemma}\label{lem:nonorms}
Let $\calN = \Glo^\simeq_\family$ be the minimal choice of norms. Then $\UCom_\family^{\calN} \simeq \CAlg(\Sp_\family^\gl)$ is equivalent to the category of $\E_\infty$ rings in $\Sp_\family^\gl$.
\end{lemma}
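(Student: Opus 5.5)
We need $\laxlim^\dagger$ over $(\Span(\bbG_\family,\bbG_\family,\bbN),\bbO_\family^\op)$ with $\bbN = \bbG_\family^{\nabla}$, the fold maps. The category $\bbN$ is the finite coproduct completion of $\Glo_\family^\simeq$, so its morphisms are generated by equivalences and fold maps $X\amalg\cdots\amalg X\to X$. The plan is to recognise $\Span(\bbG_\family,\bbG_\family,\bbN)$ as the cocartesian unstraightening of $\Span(\bbF)$-shaped data fibred over $\Glo_\family^\op$. The partially lax limit then has the form of commutative algebra objects in a partially lax limit of symmetric monoidal categories.

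\textbf{Step 1: decompose the indexing category.} Every object of $\bbG_\family$ is a finite coproduct $\coprod_i BG_i$. A span $X\leftarrow M\to Y$ with forward leg in $\bbN$ is a map $M\to X$ together with a fold-type map $M\to Y$, which is an equivalence on each component. I will show that the functor
\[
\Span(\bbG_\family,\bbG_\family,\bbN)\to\Span(\bbF),\qquad X\mapsto \pi_0X,
\]
exhibits the source as the cocartesian unstraightening (the ``Span-version of the finite coproduct completion'') of the functor
\[
\Span(\bbF)\to\Cat,\qquad S\mapsto (\Glo_\family^{\op})^{\times S}.
\]
Equivalently, $\Span(\bbG_\family,\bbG_\family,\bbN)\simeq \Span(\bbF)\times_{\Span(\bbF)}(\ldots)$ is the operad $(\Glo_\family^\op)^{\sqcup}$ underlying the cocartesian symmetric monoidal structure on $\Glo_\family^\op$ in the $\Span(\bbF)$-model of \cite{envelopes}. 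I expect this identification to be formal from the universal property of finite coproduct completions, using that backward maps in $\bbG$ split into component maps and maps of $\pi_0$.

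\textbf{Step 2: rewrite the lax limit.} Since $\Sp_\bullet$ is product-preserving (\cref{ex:sp_bullet}), the restriction of $\Sp_\bullet$ to this Span category is the symmetric monoidal family. Its fibre over $BG$ is $\Sp_{BG}$ with the smash product; the fold maps give $\otimes$ via the restriction along diagonals, since $\Sp_{X\amalg X}\simeq\Sp_X\times\Sp_X$ and pushforward along the fold is the norm along $\nabla$, i.e.\ the tensor product. A lax section that is cocartesian over the backward maps in $\bbO_\family^\op$ therefore decomposes in two ways. Over each $\Span(\bbF)$-fibre it is a lax symmetric monoidal section, hence an $\E_\infty$-algebra structure. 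Along $\Glo_\family^\op$ it is a partially lax section with the $\calO$-marking. By the general statement that lax limits commute with passing to $\CAlg$ (a Fubini argument for partially lax limits over a cocartesian fibration, e.g.\ as in \cite{LLPNorms}), we obtain
\[
\UCom^\calN_\family\simeq \CAlg\Big(\underset{(\Glo_\family^\op,\calO_\family^\op)}{\laxlim^\dagger}\Sp_\bullet\Big)=\CAlg(\Sp^\gl_\family),
\]
where the symmetric monoidal structure on the lax limit is the pointwise one.

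\textbf{Main obstacle.} The hard step is the Fubini/commutation in Step 2. Concretely, one must identify sections over the unstraightened $\Span(\bbF)\times\Glo_\family^\op$-shaped category that are cocartesian only over inert-type and $\calO$-backward edges with algebras in the symmetric monoidal partially lax limit. I would handle this by writing both sides as full subcategories of sections of the same cocartesian fibration over $\Span(\bbF)\times\Glo_\family^\op$ (using the product-preservation of $\Sp_\bullet$). It then remains to check that the two marking conditions agree: cocartesianness over backward maps in $\bbF$ (the Segal/inert condition) and over $\calO_\family^\op$.
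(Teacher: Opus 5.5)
\textbf{The gap is at the step you flag as the main obstacle, and your proposed fix does not work as stated.}

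By \cref{prop:spancocartoperad} (with $L=\Glo_\family$, $R=\Glo_\family^\simeq$), $\Span(\bbG_\family,\bbG_\family,\bbN)$ is the cocartesian operad $(\Glo_\family^\op)^\amalg$. But this category is not $\Span(\bbF)\times\Glo_\family^\op$, and it does not come with a projection to $\Glo_\family^\op$. It has objects like $BG\amalg BH$, and morphisms like $BG\amalg BH\leftarrow BK\amalg BK\to BK$ that mix the operadic direction with the $\Glo_\family^\op$ direction. So the ``decomposition in two ways'' has no literal meaning. The two sides are also not a priori full subcategories of sections of one fibration over $\Span(\bbF)\times\Glo_\family^\op$.

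What you actually need is a comparison theorem. For example, you could show that restriction along
\[
\Span(\bbF)\times\Glo_\family^\op\to\Span(\bbG_\family,\bbG_\family,\bbN),\qquad (S,BG)\mapsto S\times BG,
\]
is an equivalence on marked sections. That is a version of the universal property of the cocartesian operad for product-preserving diagrams which do \emph{not} factor through $\Span(\bbF)$. \cref{lm:calg_as_partiallylaxlimit} and \cite[Th.2.4.3.18]{ha} only cover the constant case, whereas $\Sp_\bullet$ genuinely varies with $BG$.

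Relatedly, your Step 1 description as a cocartesian fibration, or as the cocartesian symmetric monoidal structure, is only valid when $\Glo_\family^\op$ has finite coproducts. That means $\family$ must be closed under finite products, which fails for instance for the family of cyclic groups. In general the functor $S\mapsto(\Glo_\family^\op)^{\times S}$ on $\Span(\bbF)$ does not exist; only the cocartesian operad does.

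\textbf{How the paper supplies the missing input.} The paper imports it from \cite[Cor.C.8]{normsmotivic}, applied to $\bbG_\family$, which does have finite coproducts. This gives an equivalence of fully lax limits
\[
\laxlim_{\Span(\bbG_\family,\bbG_\family,\bbG_\family^\nabla)}\Sp_\bullet\simeq\laxlim_{\bbG_\family^\op}\CAlg(\Sp_\bullet),
\]
lying over the forgetful functor to $\laxlim_{\bbG_\family^\op}\Sp_\bullet$. It therefore restricts to the $\bbO_\family^\op$-marked parts. The paper then passes from $\bbG_\family^\op$ to $\Glo_\family^\op$ by \cite[Pr.3.6]{linskens2024globalizing}. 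Only at the end does it commute $\CAlg$ with the partially lax limit, via \cite[Rk.5.1]{denissilluca}.

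Your Fubini sketch is a reasonable account of that last step only. Working directly with $(\Glo_\family^\op)^\amalg$ could in principle bypass the detour through $\bbG_\family$. That requires first proving the family version of the cocartesian-operad universal property, which your proposal asserts rather than establishes.
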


\begin{proof}
Note that the finite coproduct completion of $\Glo_\family^\simeq$ is given by the wide subcategory $\bbG^\nabla_\family\subset \bbG_\family$ spanned by the fold maps. By \cite[Cor.C.8]{normsmotivic} there is an equivalence 
\[
\underset{\Span(\bbG_\family,\bbG_\family,\bbG_\family^\nabla)}{\laxlim}\Sp_\bullet\simeq\underset{\bbG_\family^\op}{\laxlim}\CAlg(\Sp_\bullet)
\]
between fully lax limits. By inspection, this equivalence lives over the forgetful functor to $\laxlim_{\bbG^{\op}_\family} \Sp_\bullet$, and so restricts to an equivalence
\[
\UCom^{\calN}_{\family} \coloneqq \underset{(\Span(\bbG_\family,\bbG_\family,\bbG_\family^\nabla),\bbO_\family^\op)}{\laxlim^\dagger} \Sp_\bullet \simeq \underset{(\bbG_\family^{\op},\bbO_\family^{\op})}{\laxlim^\dagger} \CAlg(\Sp_\bullet)
\]
between partially lax limits. We then finish with the equivalences
\[
\underset{(\bbG_\family^{\op},\bbO_\family^{\op})}{\laxlim^\dagger} \CAlg(\Sp_\bullet)\simeq \underset{(\Glo_\family^{\op},\calO_\family^{\op})}{\laxlim^\dagger} \CAlg(\Sp_\bullet)\simeq \CAlg(\underset{(\Glo_\family^{\op},\calO_\family^{\op})}{\laxlim^\dagger}\Sp_\bullet) \eqqcolon \CAlg(\Sp^{\gl}_{\calF}),
\]
the first by \cite[Pr.3.6]{linskens2024globalizing} and the second by \cite[Rk.5.1]{denissilluca}.
\end{proof}

%%%%%%%%%%%%%%%%%%%%%%%%%%%%%%%%%%%%%%%%%%%%%%%%%%%%%%%%
%%%%%%%%%%%%%%%%%%%%%%%%%%%%%%%%%%%%%%%%%%%%%%%%%%%%%%%%
%%%%%%%%%%%%%%%%%%%%%%%%%%%%%%%%%%%%%%%%%%%%%%%%%%%%%%%%
\section{Geometric fixed points of equivariant spectra}\label{sec:geometric_fixed_points}
Given a $G$-spectrum $E$ and subgroup $H\subset G$, the geometric fixed points $\Phi^H E$ carry a residual action of the Weyl group $W_HG \cong \Aut_{\Orb_{BG}}(BH\hookrightarrow BG)$. As $H$ varies, this defines a functor
\[
\Phi\colon \Sp^G \to \Fun(\Orb_{BG}^\simeq,\Sp)
\]
categorifying the classical \emph{marks homomorphism} $A(G) \hookrightarrow \Hom(\pi_0 \Orb_{BG}^\simeq,\bbZ)$ for the Burnside ring. Our goal in this section is to refine this to a natural transformation
\[
\Phi\colon \Sp_\bullet \Rightarrow \Fun(\Orb_\bullet^\simeq,\Sp) \colon \Span(\bbG) \to \Cat.
\]
In particular, we construct $\Fun(\Orb_\bullet^\simeq,\Sp)$ as a functor on $\Span(\bbG)$.

\begin{mydef}
Given a symmetric monoidal category $\ccat$, write
\[
\underline{\ccat}^\otimes\colon \Span(\bbG)\to\Cat
\]
for the composite
\begin{center}\begin{tikzcd}
\Span(\bbG)\ar[r,"\Span(\pi_0)"]&\Span(\bbF)\ar[r,"\ccat^\otimes"]&\Cat,
\end{tikzcd}\end{center}
where $\ccat^\otimes$ is the product-preserving functor encoding the symmetric monoidal product on $\ccat$.
\end{mydef}

\begin{prop}\label{prop:right_kan_of_const}
Let $\bbR\subset\bbG_\family$ be any class of morphisms which is closed under base change and $\ccat$ be a symmetric monoidal category. Then the value at $X$ of the right Kan extension of $\ul{\ccat}^\otimes$ along the inclusion
\[
\iota\colon \Span(\bbG_\family,\bbE_\family,\bbR)\to\Span(\bbG_\family,\bbG_\family,\bbR)
\]
is given by $\Fun(\Orb_X^\simeq,\ccat)$. In particular, this right Kan extension again preserves finite products.
\end{prop}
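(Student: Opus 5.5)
We compute the right Kan extension pointwise: for $X\in\Span(\bbG_\family,\bbG_\family,\bbR)$,
\[
(\iota_\ast\ul{\ccat}^\otimes)(X)\simeq\lim_{(Y,\,X\to\iota Y)\in X_{/\iota}}\ul{\ccat}^\otimes(Y),
\]
the limit over the comma category $X\downarrow\iota$. The plan is to exhibit an initial (i.e.\ limit-cofinal) subcategory of this comma category which is equivalent to $(\Orb_X^\simeq)^\op$-indexed data, on which $\ul{\ccat}^\otimes$ is constant with value $\ccat$.

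First, we reduce to backwards morphisms. An object of $X\downarrow\iota$ is a span $X\xleftarrow{f}M\xrightarrow{r}Y$ with $f$ arbitrary and $r\in\bbR$. Using the standard factorization of spans, the subcategory of objects of the form $X\xleftarrow{f}M\xrightarrow{=}M$ should be initial. The inclusion admits a right adjoint-type retraction: each span factors as the backwards part followed by the forwards map $r$, which lies in $\Span(\bbG_\family,\bbE_\family,\bbR)$ and is therefore a morphism in the comma category. So the limit reduces to a limit over the category $\mathcal{J}_X$ with objects the maps $f\colon M\to X$ in $\bbG_\family$ and morphisms the backwards maps $M\leftarrow M'$ in $\bbE_\family$ over $X$. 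Concretely, $\mathcal{J}_X\simeq((\bbG_\family)_{/X}\text{ with }\bbE\text{-morphisms})^{\op}$.

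Next, we identify the initial objects of $\mathcal{J}_X$. Every map $f\colon M\to X$ of finite groupoids factors uniquely as $M\to M'\to X$ with the first map in $\bbE$ (full, $\pi_1$-surjective onto the image) and the second faithful. In $\mathcal{J}_X$ this exhibits $(M'\to X)\in\bbO_{/X}$ as admitting a map to $(M\to X)$, and the factorization is unique up to contractible choice. Thus the full subcategory on faithful maps $\bbO_{/X}\simeq\bbF_X$, with only equivalences as morphisms (an $\bbE$-map between faithful objects over $X$ is an equivalence), is initial. The limit therefore becomes $\lim_{(\bbF_X^{\simeq})^{\op}}\ul{\ccat}^\otimes$. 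Since $\ul{\ccat}^\otimes$ is product preserving and sends coproducts of groupoids to products, with $\ul{\ccat}^\otimes(BH)=\ccat$, restricting further to the connected objects $\Orb_X^\simeq$ gives $\prod$-decomposition, and the value is $\Fun(\Orb_X^\simeq,\ccat)$. The $\Orb^\simeq$ automorphisms act trivially on $\ccat=\ul{\ccat}^\otimes(BH)$ because $\ul{\ccat}^\otimes$ factors through $\pi_0$. Finite product preservation is then immediate from $\Orb_{X\amalg Y}\simeq\Orb_X\amalg\Orb_Y$.

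The main obstacle is the cofinality in the first two steps. We need that the factorization system (full, faithful) on $\bbG_\family$ together with the base-change closure of $\bbR$ genuinely makes these subcategories initial in the $\infty$-categorical sense. I would verify this via Quillen's Theorem A: for each object $c$ of the comma category, the relevant slice should have a terminal object given by the factorization. This needs care with composition of spans, since that is where pullbacks along $\bbR$-maps appear and where closure under base change is used. It also needs care with the passage from $\bbG_\family$ to $\bbE_\family$-morphisms, which should be handled by uniqueness of the factorization system.
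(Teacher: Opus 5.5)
\textbf{There is a genuine gap, in your second and third steps.} Your first step is sound. It is the pointwise form of the rightmost Beck--Chevalley square in the paper's proof, and identifies the limit with one over $\mathcal{J}_X\simeq(\bbE_\family\times_{\bbG_\family}(\bbG_\family)_{/X})^{\op}$.

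The problem is your claim that an $\bbE$-map between faithful objects over $X$ is an equivalence. This is false: $\bbE\cap\bbO$ is the class $\bbG^\nabla$ of fold maps. For instance, the fold map $BG\amalg BG\to BG$ over $X=BG$ lies in it, as does any coproduct inclusion. For the same reason $(\bbE,\bbO)$ is not a factorization system on $\bbG$. The factorization $M\to M'\to X$ is unique only if the first map is required to have connected fibres (the class $\bar{\bbE}$ used later in the paper), not merely to lie in $\bbE$.

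Consequently, the full subcategory of $\mathcal{J}_X$ on faithful objects is $(\bbF_X^\nabla)^{\op}$, not $(\bbF_X^\simeq)^{\op}$. The core is neither initial nor does it compute the right limit. Already for $X=\ast$ this fails:
\begin{itemize}
\item every map of finite sets lies in $\bbE$;
\item the slice of the core over $\{1,2\}$ is the disconnected groupoid of finite sets under $\{1,2\}$;
\item $\lim_{(\bbF^\simeq)^{\op}}\ccat^{\times\bullet}\simeq\prod_{n\geq 0}(\ccat^{\times n})^{\mathrm{h}\Sigma_n}$, which is not $\ccat$.
\end{itemize}
Your final step fails for the same reason. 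A limit over a groupoid is a product of homotopy fixed points over its components, and product-preservation of $\ul{\ccat}^\otimes$ cannot cut it down to connected objects. It is precisely the fold maps and coproduct inclusions you discarded that make this reduction possible.

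\textbf{How to repair it.} Keep those morphisms.
\begin{enumerate}
\item The $(\bar{\bbE},\bbO)$-image factorization of $f\colon M\to X$ is initial among factorizations $M\to M'\to X$ with first map in $\bbE$ and second map faithful. Hence $(\bbF_X^\nabla)^{\op}\subset\mathcal{J}_X$ is initial.
\item $(\bbF_X^\nabla)^{\op}$ is the finite product completion of $(\Orb_X^\simeq)^{\op}$, and $\ul{\ccat}^\otimes$ preserves finite products. So its restriction is right Kan extended from $(\Orb_X^\simeq)^{\op}$, where it is constant at $\ccat$. This gives $\Fun(\Orb_X^\simeq,\ccat)$.
\end{enumerate}

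This is essentially the paper's argument in a different order. The paper first uses product-preservation (the middle square, exhibiting $\bbE_\family^{\op}\to\bbG_\family^{\op}$ as the finite product completion of $\calE_\family^{\op}\to\Glo_\family^{\op}$) to pass to connected groupoids. Only then does it invoke the factorization system $(\calE_\family,\calO_\family)$ on $\Glo_\family$. There, full and faithful genuinely means equivalence, so your claim becomes true and the relevant indexing category is $\Orb_X^\simeq$.
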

\begin{proof}
Consider the following diagram:
\begin{center}\begin{tikzcd}
(\Glo_\family^\op)^\simeq\ar[r,hook]\ar[d,"i"]&\mathcal{E}_\family^\op\ar[r]\ar[d,"j"]&\bbE_\family^\op\ar[r,hook]\ar[d,"j'"]&\Span(\bbG_\family,\bbE_\family,\bbR)\ar[d]\\
\calO_\family^\op\ar[r,hook]&\Glo_\family^\op\ar[r]&\bbG_\family^\op\ar[r,hook]& \Span(\bbG_\family,\bbG_\family,\bbR)
\end{tikzcd}\end{center}
We first claim that the Beck--Chevalley transformations relating horizontal restrictions of vertical right Kan extensions for each of these squares is an equivalence. For the rightmost square, this follows from \cite[Lm.4.1.6]{CHLLBispans} applied to the factorization systems $(\Span(\bbG_\family,\bbE_\family,\bbR),\bbE_\family^\op,\bbR)\subset(\Span(\bbG_\family,\bbG_\family,\bbR),\bbG_\family^\op,\bbR)$. For the middle square, it holds as $\ul{\ccat}^\otimes$ preserves finite products and the middle square realizes $j'$ as the finite product completion of $j$. Moreover, this implies that the right Kan extension of any finite product-preserving functor on $\bbE_\family^\op$ along $j$ again preserves finite products. For the leftmost square, it follows from (the dual of) \cite[Pr.4.1.4]{CHLLBispans}.

As $\iota_\ast\ul{\calC}^\otimes$ preserves finite products, to compute its value on a finite groupoid $X$ we may reduce to the case where $X$ is connected. By the above discussion, it therefore suffices to restrict $\ul{\ccat}^\otimes$ to $(\Glo_\family^\op)^\simeq$ and compute its right Kan extension along $i$. As $\ul{\ccat}^\otimes$ restricted to $(\Glo_\family^\op)^\simeq$ is the constant functor on $\ccat$, the pointwise formula for right Kan extensions allows us to identify this right Kan extension as the constant limit
\[
i_\ast \underline{\ccat}^\otimes(X)\simeq \lim_{(\Glo_\family^\op)^\simeq\underset{\calO_\family^\op}{\times}((\calO_{\family})_{/X})^\op}\ccat \simeq \lim_{\Orb_X^\simeq}\ccat\simeq\Fun(\Orb_X^\simeq,\ccat)
\]
for any $X \in \Glo$, establishing the proposition.
\end{proof}

Taking $\family$ to be the family of all finite groups and $\bbR = \bbG$, this produces for any symmetric monoidal category $\ccat$ a functor
\[
\Fun(\Orb_\bullet^\simeq,\ccat) = \iota_\ast\ul{\ccat}^\otimes \colon \Span(\bbG)\to\Cat.
\]

\begin{remark}
Let $f\colon Y \to X$ be a map of finite groupoids. Tracing through the construction, one finds that the restriction and norm maps
\[
f^\ast\colon \Fun(\Orb_X^\simeq,\ccat)\to\Fun(\Orb_Y^\simeq,\ccat),\qquad f_\otimes\colon \Fun(\Orb_Y^\simeq,\ccat)\to\Fun(\Orb_X^\simeq,\ccat)
\]
encoded by the functoriality of $\Fun(\Orb_\bullet^\simeq,\ccat)$ in $\Span(\bbG)$ may be described as follows.
For the restriction, given an object $M \in \Orb_Y$, the composite $M \to Y \to X$ factors uniquely as a full functor $M \to M'$ followed by a faithful functor $M' \to X$. This determines a functor $f_!\colon \Orb_Y \to \Orb_X$, and we have
\[
f^\ast\colon \Fun(\Orb_X^\simeq,\ccat)\to\Fun(\Orb_Y^\simeq,\ccat),\qquad (f^\ast E)(M) = E(f_! M).
\]
For the norm, we have
\[
f_\otimes\colon \Fun(\Orb_Y^\simeq,\ccat)\to\Fun(\Orb_X^\simeq,\ccat),\qquad (f_\otimes E)(M) \simeq \bigotimes_{S \subset Y\times_X M}E(S),
\]
this tensor product being over the path components of $ Y\times_X M$.
\end{remark}

We now focus on the case $\ccat = \Sp$, where this construction produces a natural recipient for the geometric fixed points of equivariant spectra.

\begin{lemma}\label{prop:funct_geom_fixed_points}
Restricted to $\Span(\bbG,\bbE,\bbG)$, there is a natural transformation
\[
\Phi^\bullet\colon \Sp_\bullet\to\underline{\Sp}^\otimes\colon \Span(\bbG,\bbE,\bbG)\to\Cat
\]
whose value on $BG\in\Glo$ is given by the geometric fixed points functor $\Phi^G\colon \Sp^G\to\Sp$.
\end{lemma}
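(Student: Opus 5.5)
The plan is to obtain $\Phi^\bullet$ from the extra functoriality of $\Sp_\bullet$ in $\Corr$ recorded in \cref{ex:sp_bullet}, by composing with the collapse maps to the point. For each finite groupoid $X$ there is the projection $q_X\colon X \to \pi_0 X$, regarded as a map of finite groupoids with discrete target. Since $\pi_0X$ is a finite set, $\Sp_{\pi_0 X}\simeq \Sp^{\times \pi_0 X}$, and this is exactly $\ul{\Sp}^\otimes(X)$. The forward span $X \xleftarrow{=} X \xrightarrow{q_X} \pi_0X$ is a map in $\Span(\bbG)$, and by \cref{ex:sp_bullet}(3) it is sent by $\Sp_\bullet$ to geometric fixed points componentwise, since $q_X$ is full on each component. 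It therefore gives the desired functor $\Phi^X\colon \Sp_X\to\Sp_{\pi_0X}$, with value $\Phi^G$ at $BG$.

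The task is then to make $X\mapsto (X\to\pi_0X)$ natural on $\Span(\bbG,\bbE,\bbG)$. Concretely, I would construct a functor
\[
\Span(\bbG,\bbE,\bbG)\times\Delta^1 \to \Span(\bbG)
\]
restricting to the inclusion on $\{0\}$ and to $\Span(\pi_0)$ followed by the inclusion $\Span(\bbF)\subset\Span(\bbG)$ on $\{1\}$. Composing with $\Sp_\bullet$ then gives the natural transformation. I would build this as a lax/cocartesian natural transformation of span categories induced from a natural transformation of the underlying categories, namely $\id\Rightarrow\pi_0$ on $\bbG$. Span functoriality of such a transformation requires that the naturality squares
\[
\begin{tikzcd} M \ar[r] \ar[d] & \pi_0 M \ar[d]\\ X \ar[r] & \pi_0X\end{tikzcd}
\]
be pullbacks for the backward maps. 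This is exactly where the restriction to $\bbE$ enters. If $f\colon M\to X$ is in $\bbE$, then $M\to X\times_{\pi_0X}\pi_0M$ is an equivalence of groupoids on each component, because a $\pi_1$-surjective map between connected groupoids... This is actually not an equivalence. So the naturality square is not a pullback, and the Beck--Chevalley map is only an identification up to the relation $\Phi\circ q^\ast\simeq\id$.

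To handle this I would instead work in $\Corr$: the functor of \cref{lem:corr} sends the square to a square of right adjoints among the $\bbF_\bullet$. The needed commutativity $f^\ast$ followed by $(q_M)_\ast$ versus $(q_X)_\ast$ followed by $(\pi_0f)^\ast$ holds in $\Corr$ whenever $f$ is full. Indeed, $(q_X)_\ast$ on $\bbF_X$ takes fixed points, and for $f$ surjective on $\pi_1$ we get $(X^{\pi_1})$ pulled back equal to $M$-fixed points of the restricted set. I would construct the transformation at the level of $\Span(\bbG,\bbE,\bbG)\times\Delta^1\to\Corr$ by checking that these Beck--Chevalley maps are equivalences and appealing to the universal property of span categories (e.g.\ via \cite{HHLNa}) for the forward maps. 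Forward squares commute strictly, since $\pi_0$ is functorial, and no condition is needed there. I would then compose with $\Sp_\bullet\colon\Corr\to\Cat$.

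The main obstacle is producing this coherent functor into $\Corr$. The key inputs are the mate condition just described, namely that $(q_X)_\ast$ commutes with $f^\ast$ for $f\in\bbE$, together with a clean statement of which universal property of $\Span$ yields a transformation from a family of non-pullback squares satisfying Beck--Chevalley. I would try first to phrase it as a map of adequate triples into an auxiliary category of correspondences, so that the transformation is simply the image of an honest natural transformation in $\bbF$-valued data.
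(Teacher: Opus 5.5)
Your final argument is the paper's. The components are $(q_X)_\ast\colon \bbF_X\to\bbF_{\pi_0X}$ (fixed points on each component), naturality as functors $\Span(\bbG,\bbE,\bbG)\to\Corr$ comes from fixed points commuting with restriction along $\pi_1$-surjections and with right Kan extensions, and you then postcompose with Yuan's functor $\Sp_\bullet\colon\Corr\to\Cat$. The paper asserts the coherence in one line, and your suggested unfurling over the adequate triple $(\bbG\times\Delta^1,\bbE\times(\Delta^1)^{\simeq},\bbG\times\Delta^1)$, with the Beck--Chevalley conditions you list as input, is a reasonable way to make it precise; the abandoned first attempt through $\Span(\bbG)$ can simply be deleted.
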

\begin{proof}
For any finite groupoid $X$, right Kan extension along $X \to \pi_0 X$ defines a functor
\[
(\bs)^X\colon \bbF_X \to \bbF_{\pi_0 X}
\]
taking fixed points on each component of $X$. As fixed points commute with restrictions along surjective maps and right Kan extensions along arbitrary morphisms, this defines a natural transformation between the two functors
\[
\Span(\bbG,\bbE,\bbG) \to \Corr,\qquad X \mapsto \bbF_X\text{ and }X \mapsto \bbF_{\pi_0 X}.
\]
To avoid a clash of notation, let us temporarily write $\widetilde{\Sp}_\bullet\colon \Corr\to \Cat$ for the functor $\Sp_\bullet$ of \cref{ex:sp_bullet}. Then 
\[
\Sp_\bullet = \widetilde{\Sp}_\bullet\circ \bbF_{(\bs)},\qquad \underline{\Sp}^\otimes = \widetilde{\Sp}_\bullet \circ \bbF_{\pi_0(\bs)}
\]
as functors on $\Span(\bbG)$, and so the natural transformation $(\bs)^X\colon \bbF_X\to\bbF_{\pi_0X}$ determines the desired natural transformation $\Phi^\bullet\colon \Sp_\bullet\to\underline{\Sp}^\otimes$.
\end{proof}

\begin{theorem}\label{prop:weylaction}
There is a natural symmetric monoidal transformation
\[
\Phi\colon \Sp_\bullet\Rightarrow\iota_\ast\ul{\Sp}^\otimes\simeq\Fun(\calO_\bullet^\simeq,\Sp)\colon \Span(\bbG)\to\Cat
\]
whose value on a $G$-spectrum $E$ is the functor
\[
\Phi E\colon \Orb_{BG}^\simeq\to\Sp,\qquad (BH\hookrightarrow BG) \mapsto \Phi^H E.
\]
\end{theorem}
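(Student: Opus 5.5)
The plan is to obtain $\Phi$ by adjunction from the transformation $\Phi^\bullet$ of \cref{prop:funct_geom_fixed_points}. Write $\iota\colon \Span(\bbG,\bbE,\bbG)\to\Span(\bbG)$ for the inclusion, so that $\iota_\ast$ is right adjoint to restriction $\iota^\ast$ on functor categories $\Fun(-,\Cat)$. A natural transformation $\Sp_\bullet\Rightarrow\iota_\ast\ul{\Sp}^\otimes$ of functors on $\Span(\bbG)$ then corresponds to a natural transformation $\iota^\ast\Sp_\bullet\Rightarrow\ul{\Sp}^\otimes$ on $\Span(\bbG,\bbE,\bbG)$. We take the latter to be $\Phi^\bullet$ and define $\Phi$ as its adjunct. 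By \cref{prop:right_kan_of_const}, applied with $\family=\fin$ and $\bbR=\bbG$, the target is identified with $\Fun(\Orb_\bullet^\simeq,\Sp)$.

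The first step is to identify the value of $\Phi$ on $BG$. The adjunct at $X$ is the composite of $\Sp_X\to\lim\Sp_{(-)}$ with $\lim\Phi^\bullet$. Here the limit is taken over the comma category defining the pointwise right Kan extension, namely objects $X\to\iota(Z)$ in $\Span(\bbG)$ with $Z$ in the smaller span category. I would retrace the cofinality reductions in the proof of \cref{prop:right_kan_of_const}:
\begin{itemize}
\item the rightmost square, via the factorization system;
\item the middle square, via product completion;
\item the leftmost square, reducing to $\Orb_X^\simeq$.
\end{itemize}
These reductions show that the relevant limit is computed over the objects $(BH\hookrightarrow BG)\in\Orb_{BG}^\simeq$, viewed as backward maps $BG\leftarrow BH$ in $\Span(\bbG)$. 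The component of $\Phi$ at such an object is therefore $\Phi^H\circ\res^G_H$, which is the usual $\Phi^H E$. Functoriality in automorphisms of $BH\hookrightarrow BG$, i.e.\ the Weyl group $W_HG$, comes from naturality of $\Phi^\bullet$ along equivalences of $BH$ over $BG$. This recovers the residual Weyl action.

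The second step is symmetric monoidality. Both functors are product-preserving on $\Span(\bbG)$: $\Sp_\bullet$ by \cref{ex:sp_bullet}, and $\iota_\ast\ul{\Sp}^\otimes$ by \cref{prop:right_kan_of_const}. Hence each defines a functor into $\CAlg(\Cat)$, using the fold maps $X\amalg X\to X$ in the forward direction together with the norm structure along them. A natural transformation between such functors is automatically symmetric monoidal on each value, since the monoidal structure is itself part of the span functoriality. Concretely, the tensor product on $\Sp_X$ is the composite $\Sp_X\times\Sp_X\simeq\Sp_{X\amalg X}\xrightarrow{\nabla_\otimes}\Sp_X$, and naturality of $\Phi$ with respect to $\nabla$ gives $\Phi(E\otimes F)\simeq\Phi E\otimes\Phi F$. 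I would make this precise by appealing to the equivalence between product-preserving functors $\Span(\bbG)\to\Cat$ and the corresponding normed or $\CAlg$-valued structure.

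The main obstacle is the first step: making the comma-category bookkeeping honest. One must check that the cofinal subdiagram singled out in \cref{prop:right_kan_of_const} really sends the unit map to the restrictions $\res^G_H$, and the counit to geometric fixed points $\Phi^H$. Otherwise the components might only be some twisted version of $\Phi^H$. I would handle this by naturality of the Beck--Chevalley equivalences: evaluating the unit at the object $BG\leftarrow BH$ gives $\res^G_H$ by definition of $\Sp_\bullet$ on backward maps, and composing with $\Phi^\bullet_{BH}=\Phi^H$ finishes the identification.
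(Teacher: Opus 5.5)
Your proposal is correct and follows the paper's argument: $\Phi$ is defined as the adjunct of $\Phi^\bullet$ under $\iota^\ast\dashv\iota_\ast$, and its values are identified using \cref{prop:right_kan_of_const}. For symmetric monoidality, your point that a natural transformation between product-preserving functors on spans automatically respects the tensor products encoded by forward fold maps is exactly the content of \cite[Pr.C.9]{normsmotivic}, which the paper cites.
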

\begin{proof}
This natural transformation is adjoint to the transformation of \cref{prop:funct_geom_fixed_points}. The identification of its values follows from \cref{prop:right_kan_of_const}. Symmetric monoidality is automatic by \cite[Pr.C.9]{normsmotivic}.
\end{proof}

%%%%%%%%%%%%%%%%%%%%%%%%%%%%%%%%%%%%%%%%%%%%%%%%%%%%%%%%
%%%%%%%%%%%%%%%%%%%%%%%%%%%%%%%%%%%%%%%%%%%%%%%%%%%%%%%%
%%%%%%%%%%%%%%%%%%%%%%%%%%%%%%%%%%%%%%%%%%%%%%%%%%%%%%%%
\section{Geometric fixed points of ultracommutative rings}\label{sec:funwithlaxlims}
Applying partially lax limits to \cref{prop:weylaction} provides a functor
\begin{equation}\label{eq:laxlimphi}
\UCom_\family^\calN\simeq\underset{(\Span(\bbG_\family,\bbG_\family,\bbN),\bbO_\family^\op)}{\laxlim^\dagger}\Sp_\bullet \xrightarrow{\laxlim^\dagger\Phi}\underset{(\Span(\bbG_\calF,\bbG_\calF,\bbN),\bbO_\calF^\op)}{\laxlim^\dagger} \iota_*\underline{\Sp}^{\otimes}
\end{equation}
for any $\family$-global choice of norms $\calN$. Our next goal is to identify the partially lax limit on the right hand side.

We start by describing the partially lax limit of a right Kan extension in a special case. Given cocartesian fibrations $\calF,\calF'\to\ccat$, write $\Fun^{\mathrm{co}}_\ccat(\calF,\calF')$ for the category of functors $\calF\to\calF'$ over $\ccat$ which preserve cocartesian edges.

\begin{lemma}\label{lem:envelopes}
Let $(\calC,L,R)$ be a category equipped with a factorization system. Then $t\colon \Ar_R(\ccat)\to\ccat$ is a cocartesian fibration, and there is an equivalence
\[
\underset{(\ccat,L)}{\laxlim^\dagger}F\simeq\Fun^{{\mathrm{co}}}_{\ccat}(\Ar_R(\ccat),\smallint F)
\]
for any functor $F\colon \ccat\to\Cat$.
\end{lemma}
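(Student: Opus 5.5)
The plan has two parts: show that $t$ is a cocartesian fibration using the factorization system, and then identify partially lax limits with cocartesian functors out of $\Ar_R(\ccat)$ via a left Kan extension argument.

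\emph{Step 1: $t\colon \Ar_R(\ccat)\to\ccat$ is a cocartesian fibration.} Given an object $(r\colon a\to b)$ with $r\in R$ and a morphism $g\colon b\to c$, I would simply take the composite $gr\colon a\to c$. This usually does not lie in $R$, so instead I factor $gr = r'\circ l$ with $l\in L$ and $r'\in R$. The candidate cocartesian lift is then the square from $(a\xrightarrow{r} b)$ to $(a'\xrightarrow{r'} c)$ with top edge $l$ and bottom edge $g$. To check that it is cocartesian, I compute mapping spaces in $\Ar_R(\ccat)$ as pullbacks. The required condition reduces to the unique lifting property of $l\in L$ against maps in $R$: squares out of $r$ lying over a map $c\to d$ must extend uniquely through $l$. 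This is exactly the orthogonality in the factorization system, so this step is routine. The source functor $s\colon\Ar_R(\ccat)\to\ccat$ sends these cocartesian edges to maps in $L$; this will be used below.

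\emph{Step 2: reduce to a Kan extension statement.} Write $\ccat_L\subset\ccat$ for the wide subcategory on $L$. I would use the standard identification of $\Fun_\ccat^{\mathrm{co}}(\Ar_R(\ccat),\smallint F)$ with sections of $\smallint F$ over a localization. The key observation is that $\Ar_R(\ccat)\to\ccat$ is the \emph{free cocartesian fibration} relative to the marking $L$. The identity section $\ccat\to\Ar_R(\ccat)$, $c\mapsto \id_c$, is fully faithful and left adjoint to the source functor $s$ fibrewise. More precisely, by Step 1 every object $r\colon a\to b$ is the cocartesian pushforward along $r$ of $\id_a$. Hence a cocartesian functor $\Ar_R(\ccat)\to\smallint F$ is determined by its restriction along $\id\colon\ccat\to\Ar_R(\ccat)$, which is a section $\sigma$ of $\smallint F$.

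\emph{Step 3: identify the essential image.} The main obstacle is to make Step 2 precise and to identify which sections arise. The plan is to invoke the universal property of free cocartesian fibrations from Gepner--Haugseng--Nikolaus, namely that $\Ar(\ccat)\to\ccat$ satisfies $\Fun^{\mathrm{co}}_\ccat(\Ar(\ccat),\smallint F)\simeq\Fun_\ccat(\ccat,\smallint F)$, together with the fact that $\Ar_R(\ccat)\subset \Ar(\ccat)$ is a localization. Concretely, factorizing $f=r\circ l$ gives a left adjoint $\Ar(\ccat)\to\Ar_R(\ccat)$, $f\mapsto r$, over $\ccat$. This functor inverts exactly the morphisms of arrows $(l\Rightarrow \id)$ whose source component lies in $L$ and whose target component is an identity.

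A cocartesian functor out of $\Ar(\ccat)$ corresponding to a section $\sigma$ sends such a morphism of arrows to the comparison map $F(r)(\sigma_{a'})\to F(rl)(\sigma_a)$ induced by $\sigma(l)$. This map is an equivalence for all such morphisms if and only if $\sigma$ sends maps in $L$ to cocartesian edges. Therefore cocartesian functors out of the localization $\Ar_R(\ccat)$ correspond precisely to the sections in $\laxlim^\dagger_{(\ccat,L)}F$. Verifying that the localization is compatible with cocartesianness, so that it stays over $\ccat$ and carries cocartesian edges to cocartesian edges, is the delicate point. I would handle it using the Step 1 computation, which shows that the reflector preserves the cocartesian lifts.
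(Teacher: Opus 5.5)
Your proposal is correct, but it takes a different route from the paper. The paper gives no argument of its own: it deduces the lemma immediately from \cite[Prop.~2.2.4]{envelopes}, which packages the universal property of $t\colon\Ar_R(\ccat)\to\ccat$ as the free cocartesian fibration on the marked category $(\ccat,L)$. You instead derive that universal property from two ingredients: the Gepner--Haugseng--Nikolaus theorem for $t\colon\Ar(\ccat)\to\ccat$, and the reflection $\lambda\colon\Ar(\ccat)\to\Ar_R(\ccat)$, $rl\mapsto r$.

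This works. $\lambda$ lies over $\ccat$ and is a localization at its units $(l,\id_b)\colon rl\to r$. It also sends the cocartesian edge $(\id_a,g)\colon r\to gr$ to your Step~1 lift $(l',g)\colon r\to r'$. So every cocartesian edge of $\Ar_R(\ccat)$ is the image of one in $\Ar(\ccat)$. You need this surjectivity, not just preservation, to conclude that a functor out of $\Ar_R(\ccat)$ is cocartesian iff its composite with $\lambda$ is.

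The marking then comes out cleanly. The value $\sigma(l)$ factors as $\Phi_\sigma(l,\id_{a'})\circ\Phi_\sigma(\id_a,l)$ with the second factor cocartesian. Hence $\Phi_\sigma$ inverts the unit $(l,\id_{a'})\colon l\to\id_{a'}$ iff $\sigma(l)$ is cocartesian. Invertibility of a general unit $(l,\id_b)\colon rl\to r$ then follows by comparing the two edges $\Phi_\sigma(\id_a,r)$ and $\Phi_\sigma(\id_{a'},r)\circ\Phi_\sigma(l,\id_{a'})$ out of $\Phi_\sigma(l)$, both cocartesian over $r$.

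The citation buys brevity. Your route is self-contained modulo the GHN theorem and makes transparent why the marking is exactly $L$.

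A few small corrections:
\begin{itemize}
\item $\lambda$ does not invert \emph{exactly} the maps $(u,\id)$ with $u\in L$. For (surjections, injections) on sets, the map $(\{1\}\to\ast)\to(\{1,2\}\to\ast)$ with source component the inclusion and target component $\id_\ast$ is inverted. Since your class contains all units and consists of inverted maps, localizing at it still yields $\Ar_R(\ccat)$, so nothing breaks.
\item The comparison map runs $F(rl)(\sigma_a)\simeq F(r)F(l)(\sigma_a)\to F(r)(\sigma_{a'})$, i.e.\ it is $F(r)$ applied to $\sigma(l)$; you have the direction reversed.
\item The identity section is left adjoint to $s$ globally, not fibrewise over $\ccat$, though you never use this.
\end{itemize}
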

Here, $\smallint F$ is the cocartesian unstraightening of $F$ and $\Ar_R(\ccat)\subset \Fun(\Delta^1,\ccat)$ is the full subcategory of arrows in $R$, with $t(x\to y) = y$.
\begin{proof}
This is an immediate consequence of \cite[Proposition 2.2.4]{envelopes}.
\end{proof}

\begin{prop}\label{prop:laxlimrkan}
Let $(\ccat,L,R)$ be a category equipped with a factorization system. Let $i\colon \dcat\subset\ccat$ be a subcategory containing $R$, and suppose that $(\dcat,\dcat\cap L,R)$ is again a factorization system. Then there is an equivalence
\[
\underset{(\ccat,L)}{\laxlim^\dagger}i_\ast F\simeq \underset{(\dcat,\dcat\cap L)}{\laxlim^\dagger}F
\]
for any functor $F\colon \dcat\to\Cat$, where $i_\ast F$ is the right Kan extension of $F$ along $i$.
\end{prop}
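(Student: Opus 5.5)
The plan is to reduce both sides to categories of cocartesian-edge-preserving functors out of arrow categories via \cref{lem:envelopes}, and then compare these using the universal property of right Kan extension, in its fibrational form.

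\textbf{Step 1: rewrite both sides.} Apply \cref{lem:envelopes} to $(\ccat,L,R)$ and to $(\dcat,\dcat\cap L,R)$. This gives
\[
\underset{(\ccat,L)}{\laxlim^\dagger}i_\ast F\simeq\Fun^{\mathrm{co}}_{\ccat}(\Ar_R(\ccat),\smallint i_\ast F),\qquad \underset{(\dcat,\dcat\cap L)}{\laxlim^\dagger}F\simeq\Fun^{\mathrm{co}}_{\dcat}(\Ar_R(\dcat),\smallint F).
\]
Since $R\subset\dcat$, we have $\Ar_R(\dcat)\simeq \Ar_R(\ccat)\times_\ccat\dcat$, the pullback along $t$. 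Indeed, an arrow of $R$ has both endpoints in $\dcat$. A morphism between two such arrows is a square whose two vertical maps must lie in $\dcat$. The map on targets lies in $\dcat$ by definition of the pullback. For the map on sources, I expect to use the factorization system on $\dcat$ to check it is in $\dcat$: factor it in $\dcat$ and compare with the unique factorization in $\ccat$. This needs care. If it fails, one can instead work with the full subcategory of $\Ar(\ccat)$ and restrict.

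\textbf{Step 2: the cocartesian-fibration form of right Kan extension.} The key input is that cocartesian functors into $\smallint i_\ast F$ correspond to cocartesian functors over $\dcat$ into $\smallint F$. Precisely, for a cocartesian fibration $P\to\ccat$ classified by $G$, one has
\[
\Fun^{\mathrm{co}}_\ccat(P,\smallint i_\ast F)\simeq \Fun_\ccat(G,i_\ast F)\simeq\Fun_\dcat(i^\ast G,F)\simeq\Fun^{\mathrm{co}}_\dcat(P\times_\ccat\dcat,\smallint F).
\]
The outer identifications use straightening, which identifies natural transformations with cocartesian functors over the base. The middle identification is the adjunction $i^\ast\dashv i_\ast$ at the level of mapping spaces. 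To obtain an equivalence of categories rather than of spaces, I would run this argument with $\Fun(K,\bs)$ inserted for arbitrary $K$. Concretely, use the cotensoring $F^K=\Fun(K,F(\bs))$, which commutes with right Kan extension.

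Apply this with $P=\Ar_R(\ccat)$, using Step 1 to identify $P\times_\ccat\dcat\simeq\Ar_R(\dcat)$. Combined with Step 1, this gives the claim.

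\textbf{Main obstacle.} I expect Step 1 to be the main obstacle, together with checking the cocartesian edges. The concern is that the cocartesian edges of $\Ar_R(\dcat)\to\dcat$ must be the restrictions of those of $\Ar_R(\ccat)$. Cocartesian lifts in $\Ar_R$ are computed by factoring a composite $x\to y\to y'$ as an $L$-map followed by an $R$-map. So this amounts to factorizations in $\dcat$ agreeing with those in $\ccat$. That agreement follows from uniqueness of factorizations, since $\dcat\cap L\subset L$.
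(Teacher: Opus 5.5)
Your proposal is correct and matches the paper's proof: both sides are rewritten via \cref{lem:envelopes}, the middle step is the cotensored adjunction $i^\ast\dashv i_\ast$ transported through straightening, and the remaining input is $\Ar_R(\ccat)\times_\ccat\dcat\simeq\Ar_R(\dcat)$, which the paper simply asserts follows from $(\dcat,\dcat\cap L,R)$ being a factorization system. For that step you cannot factor the source map $u$ in $\dcat$ before knowing it lies there; instead, given a square with $r,r'\in R$, $v\in\dcat$ and $r'u\simeq vr$, the $L$-part of $u$ equals the $L$-part of $vr\in\dcat$, which lies in $\dcat\cap L$ by uniqueness of factorizations, so $u\in\dcat$ and your fallback is unnecessary.
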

\begin{proof}
The assumption that $(\dcat,\dcat\cap L,R)$ is a factorization system guarantees that
\begin{center}\begin{tikzcd}
\Ar_R(\dcat)\ar[r,hook]\ar[d,"t"]&\Ar_R(\ccat)\ar[d,"t"]\\
\dcat\ar[r,"i",hook]&\ccat
\end{tikzcd}\end{center}
is cartesian. Using \cref{lem:envelopes}, we may therefore compute
\begin{align*}\hspace{-9.66pt}
\underset{(\ccat,L)}{\laxlim^\dagger}i_\ast F \simeq \Fun^{\text{co}}_\ccat(\Ar_R(\ccat),\smallint i_\ast F)\simeq \Fun^{{\text{co}}}_\dcat(i^\ast\Ar_R(\ccat),\smallint F)\simeq \Fun^{{\text{co}}}_\dcat(\Ar_R(\dcat),\smallint F) \simeq \underset{(\dcat,\dcat\cap L)}{\laxlim^\dagger}F
\end{align*}
as claimed.
\end{proof}

We will ultimately use this to reduce our partially lax limit to the following example.

\begin{lemma}\label{lm:calg_as_partiallylaxlimit}
Let $\calC^\otimes\colon \Span(\Fin)\to \Cat$ be a symmetric monoidal category. Let $I$ be a category, and denote by $I^\amalg \to \Span(\Fin)$ the cocartesian operad on $I$, and by $I^\amalg_\inert$ the collection of inert edges. Then there is an equivalence
\[
\Fun(I,\CAlg(\calC^\otimes)) \simeq \underset{(I^\amalg,I^\amalg_{\inert})}{\laxlim^{\dagger}}\left(I^\amalg \to \Span(\Fin) \xrightarrow{\calC^\otimes} \Cat\right).
\]
\end{lemma}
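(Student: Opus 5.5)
The plan is to reduce the statement to the standard description of algebras over an operad as sections of the unstraightening, and then to compute algebras over the cocartesian operad $I^\amalg$. Write $p\colon \smallint\calC^\otimes\to\Span(\Fin)$ for the cocartesian unstraightening of $\calC^\otimes$; this is the symmetric monoidal category in the span model. Pull it back along $I^\amalg\to\Span(\Fin)$. By definition the right hand side is then the full subcategory of sections $I^\amalg\to I^\amalg\times_{\Span(\Fin)}\smallint\calC^\otimes$ that send inert edges to cocartesian edges. Equivalently, it is the category of functors $I^\amalg\to\smallint\calC^\otimes$ over $\Span(\Fin)$ that preserve inert-cocartesian edges. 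In the span formalism of \cite{envelopes}, this is the category $\mathrm{Alg}_{I^\amalg}(\calC^\otimes)$ of $I^\amalg$-algebras in $\calC$, i.e.\ maps of operads $I^\amalg\to\calC^\otimes$. The first step is therefore purely definitional: match the partially lax limit with the span-model definition of operad algebras, using the translation of \cite{envelopes}.

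The second step is the identification $\mathrm{Alg}_{I^\amalg}(\calC)\simeq\Fun(I,\CAlg(\calC))$, which is the span-model analogue of \cite[Th.2.4.3.18]{ha}. This says that the cocartesian operad $I^\amalg$ is the operadic tensor product $I\otimes\Comm$, where $I$ is regarded as a trivial operad. I would deduce it by transporting Lurie's theorem through the equivalence between $\bbF_\ast$-operads and $\Span(\Fin)$-operads of \cite{envelopes}. Under that equivalence the cocartesian operad on $I$ is sent to the cocartesian operad in the span model, and algebras are sent to algebras, so Lurie's theorem yields
\[
\mathrm{Alg}_{I^\amalg}(\calC)\simeq\Fun(I,\mathrm{Alg}_{\Comm}(\calC))=\Fun(I,\CAlg(\calC)).
\]

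If a direct argument is preferred, there is an alternative route. The functor $I^\amalg\to\Span(\Fin)$ is itself cocartesian over the backward (inert-type) maps. An inert-preserving section is determined by its restriction to the fibre over $\langle 1\rangle$, which is $I$, together with the active (forward) structure. One then compares with $\Fun(I,-)$ applied to $\CAlg(\calC)=\mathrm{Alg}_{\Span(\Fin)}(\calC)$ via the exponential $\Fun(I,\calC)^\otimes$ with its pointwise monoidal structure. The comparison uses $I^\amalg\simeq$ the unstraightening of $\langle n\rangle\mapsto I^{\times n}$ in the cartesian direction, and pointwise tensor in $\Fun(I,\calC)$ is computed by left Kan extension along the diagonal.

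The main obstacle is the second step. Specifically, one must check that the span-model definition of the cocartesian operad $I^\amalg$ really corresponds to Lurie's $I^\amalg$ under the comparison of \cite{envelopes}, and that the inert edges match. I would verify this by describing both as the cartesian unstraightening of $I^{\times\bullet}$ over $\Fin^\op$, extended to spans, and checking on fibres and on inert maps, where both are projections.
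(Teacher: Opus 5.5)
Your proposal is correct and follows the same route as the paper. You first identify the partially lax limit with functors $I^\amalg\to\smallint\calC^\otimes$ over $\Span(\Fin)$ that send inert edges to cocartesian edges, i.e.\ with $I^\amalg$-algebras. You then invoke the universal property of the cocartesian operad \cite[Th.2.4.3.18]{ha}, transported along the comparison between the $\Fin_*$ and $\Span(\Fin)$ models of \cite{envelopes}, which the paper simply takes for granted in its conventions.
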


\begin{proof}
By definition, the right hand side is equivalent to the subcategory of functors
\[\begin{tikzcd}
	{I^\amalg} && {\smallint \calC^\otimes} \\
	& {\Span(\Fin)}
	\arrow["F", from=1-1, to=1-3]
	\arrow[from=1-1, to=2-2]
	\arrow[from=1-3, to=2-2]
\end{tikzcd}\]
over $\Span(\Fin)$ which send inert edges in $I^\amalg$ to cocartesian edges of the cocartesian unstraightening $\smallint \calC^\otimes$. This is equivalent to $\Fun(I,\CAlg(\calC^\otimes))$ by the universal property of the cocartesian operad \cite[Th.2.4.3.18]{ha}.
\end{proof}

A good supply of cocartesian operads is obtained from the following.

\begin{lemma}\label{prop:spancocartoperad}
Let $(\ccat,L,R)$ be a category equipped with wide subcategories $L,R\subset\ccat$ closed under pullback along each other, and let $(\bbC,\bbL,\bbR)$ be its finite coproduct completion. Then the functor
\[
\Span(\pi_0)\colon \Span(\bbC,\bbL,\bbR) \to \Span(\bbF)
\]
exhibits the source as the cocartesian operad on $\Span(\ccat,L,R)$. Moreover, the inert morphisms are given by the class $(\bbC^\nabla)^\op\subset\bbL^\op\subset\Span(\bbC,\bbL,\bbR)$ of backwards fold maps.
\end{lemma}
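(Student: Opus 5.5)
We need to show that $\Span(\pi_0)\colon\Span(\bbC,\bbL,\bbR)\to\Span(\bbF)$ is the cocartesian operad on $\Span(\ccat,L,R)$, and that its inert morphisms are the backwards fold maps. The plan is to verify the characterizing property of the cocartesian operad directly and then identify it with $\Span(\ccat,L,R)^\amalg$. In the $\Span(\bbF)$-model of operads (see \cite{envelopes}), the cocartesian operad $I^\amalg$ on a category $I$ is a cocartesian fibration over $\Span(\bbF)$. Its fibre over a finite set $S$ is $I^{\times S}$, and a forward map $S\to T$ acts by the coproduct-type functor $I^{\times S}\to I^{\times T}$. Since $I$ need not have coproducts, this functor is really only lax.

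A cleaner characterization is better here. I will identify $I^\amalg$ with the unstraightening of $T\mapsto$ (finite families over $T$) as a span-category built from the finite coproduct completion. Concretely, for a category $I$ with coproduct completion $\mathbb{I}$, the map $\pi_0\colon\mathbb{I}\to\bbF$ is a cartesian fibration whose cartesian arrows are the fold maps $\mathbb{I}^\nabla$. So I first check that $\Span(\pi_0)$ is well defined. Pullbacks in $\bbC$ of $\bbL$-maps along $\bbR$-maps exist and are computed componentwise from pullbacks in $\ccat$ (the hypothesis on $L,R$) together with disjoint unions. Moreover, $\pi_0$ preserves these pullbacks only up to the fibre-product formula on components. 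The precise statement needed is that $\pi_0$ sends such pullback squares to pullbacks in $\bbF$ after replacing the backwards legs by fold maps, and this is where the fibration structure enters.

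The key steps, in order, are as follows.
\begin{enumerate}
\item Show that every map in $\bbL$ factors uniquely as a fold map followed by a map in $\bbL$ lying over an identity (or bijection) of $\pi_0$.
\item Deduce that backwards fold maps are $\Span(\pi_0)$-cocartesian lifts of backwards maps in $\bbF$. These are exactly the inert lifts, by the unique lifting property of cartesian morphisms for $\pi_0$.
\item Show that forward maps $S\to T$ in $\bbF$ admit cocartesian lifts, given by the forward map $\coprod_{s}x_s\to\coprod_{t}\bigl(\coprod_{s\mapsto t}x_s\bigr)$. This map is a fold map, so it lies in $\bbR$, because $\bbR$ is the coproduct completion of $R$ and hence contains fold maps.
\item Check the Segal condition: the fibre over $S$ is $\Span(\ccat,L,R)^{\times S}$, and mapping spaces over a span $S\leftarrow U\to T$ decompose as products over $t\in T$ of spans out of $\coprod_{u\mapsto t}x_u$.
\end{enumerate}

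Finally, I compare with $\Span(\ccat,L,R)^\amalg$ using its universal property \cite[Th.2.4.3.18]{ha}. Alternatively, I can appeal to the description of cocartesian operads as the operads whose envelope is the free coproduct completion. Here the envelope of $\Span(\bbC,\bbL,\bbR)\to\Span(\bbF)$ is $\Span(\bbC,\bbL,\bbR)$ with the disjoint union monoidal structure, and this is the free coproduct completion of $\Span(\ccat,L,R)$ as a symmetric monoidal category whose tensor is given by disjoint union. The main obstacle I expect is step 2 together with the mapping-space decomposition in step 4. Both require controlling the composition of spans in $\Span(\bbC,\bbL,\bbR)$ when the backward legs are non-fold maps between components. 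To handle this, I would first reduce, via the factorization of step 1, to the case where the backward leg is a fold map followed by a componentwise $L$-map, and then compute pullbacks componentwise.
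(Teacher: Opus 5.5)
\textbf{There is a genuine gap in your proposal.}

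\textbf{Step 3 is false.} It rests on a misconception in your opening description of $I^\amalg$. The cocartesian operad on $I$ is a cocartesian fibration over $\Span(\bbF)$ only when $I$ has finite coproducts. In general it has cocartesian lifts only of backward (inert) maps, and that is all an operad requires.

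Concretely, a cocartesian lift of the forward map $S\to\ast$ at $(x_s)_{s\in S}$ would be an object $y$ of the fibre $\Span(\ccat,L,R)$ with $\Hom(y,z)\simeq\prod_s\Hom(x_s,z)$ naturally in $z$. That is, $y$ would be a coproduct in $\Span(\ccat,L,R)$, and such coproducts typically do not exist. For example, in $\Span(\Glo,\calE,\calO)$ one has $\Hom(BK,B1)=\emptyset$ unless $K=1$. So a coproduct of $B1$ with itself would have to be $B1$. Yet $\Hom(B1,BC_2)\simeq BC_2\amalg\ast$ has two components while its square has four.

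Your formula conceals this. $\coprod_{s\mapsto t}x_s$ is not an object of $\ccat$, so the displayed target is just $\coprod_s x_s$, and the map lies over $\id_S$, not over $S\to T$. Step 3 should simply be dropped.

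Two smaller points:
\begin{itemize}
\item In step 1 the order is reversed. In $\bbL$ a map factors as a map over an identity of $\pi_0$ followed by a fold map; the fold comes first only in $\bbL^{\op}$.
\item $\pi_0$ preserves pullbacks of $\bbL$-maps along $\bbR$-maps on the nose, since these pullbacks are computed componentwise in $\ccat$. No replacement by fold maps is needed.
\end{itemize}

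\textbf{The main content of the lemma is never established.} You need that this operad is the \emph{cocartesian} operad on its fibre, not just some operad with fibre $\Span(\ccat,L,R)$ and the expected multimorphism spaces.

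Invoking the universal property of $I^\amalg$ only helps if you construct a functor $\Span(\ccat,L,R)\to\CAlg(\Span(\bbC,\bbL,\bbR))$ and show that the induced map of operads is an equivalence. You do not do this.

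Your envelope route is misstated. The envelope of an operad over $\Span(\bbF)$ has as underlying category the wide subcategory of \emph{active} morphisms. Here these are the spans $X\leftarrow M\to Y$ with $\pi_0M\to\pi_0X$ bijective, and it is this subcategory that is the free finite coproduct completion of $\Span(\ccat,L,R)$. The whole of $\Span(\bbC,\bbL,\bbR)$ is semiadditive: the empty object is a zero object and disjoint union is a biproduct. So it is not a free coproduct completion.

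That semiadditivity is exactly the idea the paper uses. It quotes a characterization of cocartesian operads due to Cnossen: $p\colon\dcat\to\Span(\bbF)$ exhibits $\dcat$ as the cocartesian operad on its fibre if and only if
\begin{itemize}
\item $\dcat$ has $p$-cocartesian lifts of backward maps,
\item $\dcat$ is semiadditive,
\item $p$ preserves finite products, and
\item product projections are $p$-cocartesian.
\end{itemize}
Your steps 1--2 correspond to the first condition. The paper gets it directly from \cite[Th.3.1]{HHLNa} applied to the cartesian fibration $\pi_0\colon\bbL\to\bbF$, whose cartesian edges are the fold maps. The remaining conditions hold because direct sums in $\Span(\bbC,\bbL,\bbR)$ are disjoint unions, with projections given by backward coproduct inclusions, which are backward fold maps and hence inert.

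A correctly set up envelope comparison (active part versus free coproduct completion, compatibly over $\bbF$) could in principle be an alternative. As written, though, neither of your routes closes the argument.
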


\begin{proof}
We will make use of a characterization of cocartesian operads to appear in forthcoming work of Cnossen, see \cite[Cor.16.2.17]{ha_notes}. Combined with the identification of operads with $(\Span(\bbF),\bbF^{\op},\bbF)$-operads in the sense of \cite[Def.2.46]{LLPNorms}, see Ex.2.47 there, this shows that a functor $p\colon \dcat\to\Span(\bbF)$ exhibits $\dcat$ as the cocartesian operad of its fiber $\dcat_1$ if and only if the following conditions are satisfied:
\begin{enumerate}
\item $\dcat$ admits $p$-cocartesian lifts of maps in $\bbF^{\op}\subset \Span(\bbF)$;
\item $\dcat$ is semiadditive;
\item $p$ preserves finite products;
\item For all $X,Y\in\dcat$, the projections $X\times Y\to X,Y$ are $p$-cocartesian.
\end{enumerate}
Moreover, in this case the inert morphisms in $\dcat$ are the $p$-cocartesian morphisms of (1).

We claim that $\Span(\pi_0)\colon \Span(\bbC,\bbL,\bbR)\to\Span(\bbF)$ satisfies these conditions. As $\pi_0\colon \bbL \to \bbF$ is a cartesian fibration, \cite[Th.3.1]{HHLNa} shows that $\pi_0$-cartesian lifts along $\bbL \to \bbF$, viewed as backwards maps, define $\Span(\pi_0)$-cocartesian lifts of maps in $\bbF^\op\subset\Span(\bbF)$, proving (1). As the $\pi_0$-cartesian arrows are precisely the fold maps, this also shows that the $\Span(\pi_0)$-cocartesian lifts of backwards maps are precisely the backwards fold maps $(\bbC^\nabla)^{\op}\subset \Span(\bbC,\bbL,\bbR)$, giving the claimed identification of inert morphisms.

As the categories $\bbL,\bbR\subset\bbC$ admit finite coproducts, the category $\Span(\bbC,\bbL,\bbR)$ is semiadditive with direct sums computed as coproducts in the underlying category $\bbC$; see \cite[Pr.2.2.5]{CHLLTambara} for a proof in the case $\bbL = \bbC$ which works just as well here. This establishes (2) and (3). This result also shows that the projection maps are given by backwards coproduct inclusions, which together with the above identification of inerts establishes (4).
\end{proof}

This has all led to the following computation.

\begin{prop}\label{prop:laxlimasfunctorcategory}
There is an equivalence
\[
\underset{(\Span(\bbG_\family,\bbG_\family,\bbN),\bbO_\family^\op)} {\laxlim^\dagger}\iota_\ast \underline{\ccat}^\otimes\simeq \Fun(\Span(\Glo_\family,\mathcal E_\family,\calN),\CAlg(\ccat)).
\]
for any $\family$-global choice of norms $\calN$ and symmetric monoidal category $\ccat$.
\end{prop}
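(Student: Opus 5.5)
The plan is to chain \cref{prop:laxlimrkan}, \cref{prop:spancocartoperad} and \cref{lm:calg_as_partiallylaxlimit}. The functor $\iota_\ast\ul{\ccat}^\otimes$ is, by \cref{prop:right_kan_of_const}, the right Kan extension along $\iota\colon \Span(\bbG_\family,\bbE_\family,\bbN)\to\Span(\bbG_\family,\bbG_\family,\bbN)$, applied with $\bbR=\bbN$. That proposition applies because $\bbN$ is closed under base change by \cref{def:choiceofnorms}.

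First I set up the factorization systems. On $\ccat=\Span(\bbG_\family,\bbG_\family,\bbN)$ I take the standard factorization system with $L=\bbG_\family^\op$ (backwards maps) and $R=\bbN$ (forwards maps). On the subcategory $\dcat=\Span(\bbG_\family,\bbE_\family,\bbN)$, which contains $R=\bbN$, I take $(\dcat\cap L,R)=(\bbE_\family^\op,\bbN)$; this is again a factorization system of span categories.

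The catch is that \cref{prop:laxlimrkan} is phrased with marking $L$, whereas our marking is only $\bbO_\family^\op\subset L$. I would handle this by first proving a variant of \cref{lem:envelopes} and \cref{prop:laxlimrkan} for a marking $W\subset L$. Concretely, I would check that \cite[Pr.2.2.4]{envelopes} identifies the partially lax limit with marking $W$ with functors $\Ar_R(\ccat)\to\smallint F$ over $\ccat$ that send cocartesian lifts of maps in $W$ to cocartesian edges. The cartesian square of $\Ar_R$'s in the proof of \cref{prop:laxlimrkan} is unaffected by the marking, so the same chain of equivalences goes through. Since $\bbO_\family\cap\bbE_\family=\bbG^\nabla_\family$ consists exactly of the fold maps, this yields
\[
\underset{(\Span(\bbG_\family,\bbG_\family,\bbN),\bbO_\family^\op)}{\laxlim^\dagger}\iota_\ast\ul{\ccat}^\otimes\simeq\underset{(\Span(\bbG_\family,\bbE_\family,\bbN),(\bbG^\nabla_\family)^\op)}{\laxlim^\dagger}\ul{\ccat}^\otimes .
\]
This generalized-marking step is the one I expect to be the main obstacle. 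If \cite{envelopes} does not directly give the marked version, I would try a fallback: show that a section with the $L$-marking condition automatically inverts $\bbE^\op$-maps in the right Kan extension, by inspecting the pointwise formula.

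Second, I apply \cref{prop:spancocartoperad} to $(\Glo_\family,\calE_\family,\calN)$. Its finite coproduct completion is $(\bbG_\family,\bbE_\family,\bbN)$, and $\calE_\family$ and $\calN$ are closed under pullback along each other by the remark after \cref{def:choiceofnorms}. The proposition then exhibits $\Span(\pi_0)\colon\Span(\bbG_\family,\bbE_\family,\bbN)\to\Span(\bbF)$ as the cocartesian operad on $I=\Span(\Glo_\family,\calE_\family,\calN)$, with inert edges exactly the backwards fold maps $(\bbG^\nabla_\family)^\op$. Since $\ul{\ccat}^\otimes=\ccat^\otimes\circ\Span(\pi_0)$ by definition, \cref{lm:calg_as_partiallylaxlimit} identifies the right-hand side above with $\Fun(\Span(\Glo_\family,\calE_\family,\calN),\CAlg(\ccat))$, which completes the proof.
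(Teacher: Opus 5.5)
Your second step is exactly the paper's. The first step has a genuine gap, though: the marked generalization of \cref{lem:envelopes} that you rely on is false as stated, and this is precisely where the proof needs an idea.

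\textbf{Why the generalization fails.} It breaks down even when $W=L$. Take $\ccat=\Delta^1$ with $L$ the equivalences and $R$ all maps, so that $\Ar_R(\ccat)\cong\Delta^2$. Then preserving cocartesian lifts of $W$-maps imposes no condition at all. For $F$ classifying $f\colon A\to B$, your recipe returns all functors $\Delta^2\to\smallint F$ over $\Delta^1$, i.e.\ data $f(a)\to b_1\to b_2$. The lax limit, however, consists of pairs with a map $f(a)\to b$. \cref{lem:envelopes} also requires the cocartesian lifts of $R$-maps to be preserved.

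Patching the condition does not rescue the argument. The second equivalence in the proof of \cref{prop:laxlimrkan} is the adjunction $i^\ast\dashv i_\ast$, which holds only for functors preserving all cocartesian edges, i.e.\ strict transformations, and not for partially lax ones. So ``the same chain goes through'' with a smaller marking is unjustified. Your fallback does not help either: the sections in question are only required to send $\bbO_\family^\op$, not $\bbG_\family^\op$, to cocartesian edges. The standard factorization system $(\bbG_\family^\op,\bbN)$ is simply the wrong one here.

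\textbf{The missing idea.} Change the factorization system so that the marking is itself the left class.
\begin{itemize}
\item Let $\bar{\bbE}_\family\subset\bbG_\family$ be the maps with connected fibers. Then $(\bar{\bbE}_\family,\bbO_\family)$ is a factorization system on $\bbG_\family$.
\item It induces a factorization system on $\Span(\bbG_\family,\bbG_\family,\bbN)$ with left class $\bbO_\family^\op$ and right class $\bbN\circ\bar{\bbE}_\family^\op$, the spans whose backwards leg has connected fibers.
\item Since $\bar{\bbE}_\family\subset\bbE_\family$, this right class lies in $\Span(\bbG_\family,\bbE_\family,\bbN)$. There it is the right class of a factorization system with left class $\bbO_\family^\op\cap\Span(\bbG_\family,\bbE_\family,\bbN)=(\bbG_\family^\nabla)^\op$, because a faithful $\pi_1$-surjection is a fold map.
\end{itemize}
Now \cref{prop:laxlimrkan} applies verbatim with $L=\bbO_\family^\op$. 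It yields exactly your displayed equivalence, with no marked variant of \cref{lem:envelopes} needed.
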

\begin{proof}
Let $\bar{\bbE}_\family\subset\bbG_\family$ denote the class of morphisms with connected fibers, so that $(\bar{\bbE}_\family,\bbO_\family)$ is a factorization system on $\bbG_\family$. Then $\bbO_\family^\op$ is the left class of a factorization system on $\Span(\bbG_\family,\bbG_\family,\bbN)$ with right class $\bbN\circ \bar{\bbE}_\family^\op$ given by those spans with backwards map in $\bar{\bbE}_\family$. This right class is contained in the subcategory $\Span(\bbG_\family,\bbE_\family,\bbN)$, where it is the right class in a factorization system with left class
\[
\bbO_\family^\op\cap\Span(\bbG_\family,\bbE_\family,\bbN)\simeq(\bbG_\family^\nabla)^\op
\]
equivalent to the class of backwards fold maps. We therefore obtain equivalences
\[
\underset{(\Span(\bbG_\family,\bbG_\family,\bbN),\bbO_\family^\op)} {\laxlim^\dagger}\iota_\ast \underline{\ccat}^\otimes \simeq  \underset{(\Span(\bbG_\family,\bbE_\family,\bbN),(\bbG_\family^\nabla)^\op)}{\laxlim^\dagger}\underline{\ccat}^\otimes \simeq  \Fun(\Span(\Glo_\family,\mathcal E_\family,\calN),\CAlg(\ccat)),
\]
the first by  \cref{prop:laxlimrkan} and the second by combining \cref{lm:calg_as_partiallylaxlimit} and \cref{prop:spancocartoperad}.
\end{proof}

The primary case of interest is where $\ccat = \Sp$. Combining the equivalence of \Cref{prop:laxlimasfunctorcategory} with (\ref{eq:laxlimphi}), we obtain a geometric fixed points functor
\[
\Phi\colon \UCom_\family^\calN \to \underset{(\Span(\bbG_\family,\bbG_\family,\bbN),\bbO_\family^\op)} {\laxlim^\dagger}\iota_\ast \underline{\Sp}^\otimes\simeq \Fun(\Span(\Glo_\family,\mathcal E_\family,\calN),\CAlg),
\]
This establishes the following:

\begin{theorem}\label{thm:geometricfpofucomm}
Geometric fixed points of $\calN$-normed $\family$-global rings assemble into the functor
\[
\Phi\colon \UCom_\family^\calN\to\Fun(\Span(\Glo_\family,\calE_\family,\calN),\CAlg).
\]
Its value on a $\calN$-normed $\family$-global ring $E$ is the functor
\[
\Phi E\colon \Span(\Glo_\family,\mathcal E_\family,\calN) \to \CAlg,\qquad BG \mapsto \Phi^G E,
\]
with functoriality in $\mathcal E_\family^\op$ given by restriction and in $\calN$ given by geometric norms.
\qed
\end{theorem}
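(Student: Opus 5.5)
The functor itself is already in hand: it is the composite of $\laxlim^\dagger\Phi$ from (\ref{eq:laxlimphi}) with the equivalence of \cref{prop:laxlimasfunctorcategory} for $\ccat=\Sp$. What remains is to identify its values on objects and morphisms. The plan is to unwind each equivalence in that chain and track a section $E$ of $\smallint\Sp_\bullet$ through it. Throughout, $E$ is a section over $\Span(\bbG_\family,\bbG_\family,\bbN)$ sending backwards faithful maps to cocartesian edges.

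\textbf{Step 1: transport $E$.} Applying $\Phi$ produces a section $\Phi E$ of $\smallint\iota_\ast\ul{\Sp}^\otimes$. Its value at $X$ is the functor $\Orb_X^\simeq\to\Sp$, $(M\to X)\mapsto\Phi^{M}E_X$, by \cref{prop:weylaction}.

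\textbf{Step 2: pass through \cref{prop:laxlimrkan}.} By \cref{lem:envelopes}, this amounts to precomposing with $\Ar_R(\dcat)\subset\Ar_R(\ccat)$ and then composing with the counit $\iota^\ast\iota_\ast\ul{\Sp}^\otimes\to\ul{\Sp}^\otimes$. The counit at $X$ is evaluation at the identity objects of $\Orb_X$. Hence for connected $X=BG$, the resulting section of $\smallint\ul{\Sp}^\otimes$ over $\Span(\bbG_\family,\bbE_\family,\bbN)$ has value $\Phi^G E_G$.

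\textbf{Step 3: pass through \cref{lm:calg_as_partiallylaxlimit} and \cref{prop:spancocartoperad}.} This step restricts the section to the fibre $\Span(\Glo_\family,\calE_\family,\calN)$ over $1\in\Span(\bbF)$. The $\E_\infty$ structures come from the cocartesian operad structure, that is, from the backwards fold maps. This gives the claimed values $BG\mapsto\Phi^G E$.

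\textbf{Step 4: identify the functoriality.} For a surjection $q\colon BG\to BK$ regarded as a backwards map in $\calE_\family^\op$, the section supplies an edge $q^\ast E_K\to E_G$ in $\Sp_{BG}$, namely the inflation structure map. Applying $\Phi^G$ and using $\Phi^G q^\ast\simeq\Phi^K$ yields the inflation $\Phi^K E\to\Phi^G E$. That identity is exactly the relation encoded by $\Sp_\bullet$ on $\Corr$ in the proof of \cref{prop:funct_geom_fixed_points}, via $(\bs)^X$ commuting with restriction along surjections. For $i\colon BH\to BG$ in $\calN$, the section supplies the norm map $N_H^G E_H\to E_G$. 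Applying $\Phi^G$ and the symmetric monoidal identification $\Phi^G N_H^G\simeq\Phi^H$, the latter coming from $(\bs)^X$ commuting with right Kan extensions, yields the geometric norm.

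\textbf{Main obstacle.} I expect the only real content to be Step 2. One must check that the counit of the right Kan extension, evaluated at the fibre over an arrow in $R$, is precisely evaluation at the identity orbit. This follows from the pointwise formula in \cref{prop:right_kan_of_const}, since the identity $BG\to BG$ is the relevant object of $\Orb_{BG}^\simeq$. Once that is settled, everything else is bookkeeping, which is why the statement can be recorded with \qed.
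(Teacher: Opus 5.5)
Your proposal is correct and follows the paper's route. The paper simply defines $\Phi$ as the composite of $\laxlim^\dagger\Phi$ from (\ref{eq:laxlimphi}) with the equivalence of \cref{prop:laxlimasfunctorcategory}, and leaves the identification of values and functoriality implicit; your Steps 2--4 unwind this correctly, and Steps 2 and 4 can be merged: by the triangle identity, restricting $\Phi$ to $\Span(\bbG_\family,\bbE_\family,\bbN)$ and composing with the counit gives back the transformation $\Phi^\bullet$ of \cref{prop:funct_geom_fixed_points}, so the resulting section is $\Phi^\bullet\circ E$ and the inflations and geometric norms can be read off from its naturality squares.
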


%%%%%%%%%%%%%%%%%%%%%%%%%%%%%%%%%%%%%%%%%%%%%%%%%%%%%%%%
%%%%%%%%%%%%%%%%%%%%%%%%%%%%%%%%%%%%%%%%%%%%%%%%%%%%%%%%
%%%%%%%%%%%%%%%%%%%%%%%%%%%%%%%%%%%%%%%%%%%%%%%%%%%%%%%%
\section{Rational ultracommutative rings}\label{sec:rational}
We can now put everything together to prove our main theorem. First, we review the requisite notion of rationality. 

\begin{mydef}
Let $R\subset\Q$ be a subring of the rational numbers. We say that an $X$-spectrum (resp.\ $\family$-global spectrum) $E$ is \emph{$R$-local} if any of the following equivalent conditions are satisfied:
\begin{enumerate}
\item $\pi_\ast E^H$ is an $R$-module for every $BH\to X$ in $\calO_X$ (resp.\ every $H \in \family$);
\item $\pi_\ast \Phi^H E$ is an $R$-module for every $BH \to X$ in $\calO_X$ (resp.\ every $H\in \family$);
\item $E$ is Bousfield local with respect to the localization $\spherespectrum[(R^\times\cap\Z)^{-1}]$.
\end{enumerate}
The full subcategory of $R$-local $X$-spectra is denoted by $\Sp_{X,R}\subset\Sp_{X}$, and the full subcategory of $R$-local $\family$-global spectra is denoted by $\Sp^\gl_{\family,R}\subset\Sp^\gl_\family$.
\end{mydef}

As $R$-locality is detected by geometric fixed points, restriction to $R$-local equivariant spectra determines a subfunctor
\[
\Sp_{\bullet,R}\subset\Sp_\bullet\colon \Span(\bbG) \to \Cat
\]
for which the natural transformation $\Phi$ of \cref{prop:weylaction} restricts to
\[
\Phi\colon\Sp_{\bullet,R}\Rightarrow\iota_\ast\ul{\Sp}_R^\otimes.
\]

\begin{prop}\label{prop:wimmer}
Let $\family$ be a family of finite groups and $R\subset \Q$ be a subring with $|G| \in R^\times$ for all $G\in \family$. Then $\Phi$ restricts to an equivalence
\[
\Phi\colon \Sp_{\bullet,R}\simeq \iota_\ast \ul{\Sp}_R^\otimes\colon \Span(\bbG_\family)\to\Cat.
\]
\end{prop}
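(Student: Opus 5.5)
A natural transformation of functors $\Span(\bbG_\family)\to\Cat$ is an equivalence as soon as each of its components is an equivalence of categories. So it suffices to show that for every $X\in\bbG_\family$ the functor
\[
\Phi_X\colon \Sp_{X,R}\to \Fun(\Orb_X^\simeq,\Sp_R)
\]
is an equivalence. Here we first check that $\iota_\ast\ul{\Sp}_R^\otimes$ has value $\Fun(\Orb_X^\simeq,\Sp_R)$. This follows from \cref{prop:right_kan_of_const} applied to the symmetric monoidal category $\Sp_R$ with $\bbR=\bbG_\family$.

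Both sides preserve finite products: $\Sp_\bullet$ by \cref{ex:sp_bullet}, and the right Kan extension by \cref{prop:right_kan_of_const}. The identification $\Orb_{X\amalg Y}\simeq\Orb_X\amalg\Orb_Y$ makes $\Phi$ compatible with these product decompositions. Hence we reduce to connected $X=BG$ with $G\in\family$. By \cref{prop:weylaction}, $\Phi_{BG}$ is then the classical functor
\[
E\mapsto \bigl(H\mapsto \Phi^HE \text{ with its } W_GH\text{-action}\bigr).
\]
The remaining task is the classical splitting of $R$-local $G$-spectra when $|G|$ is invertible in $R$.

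For the splitting, I would argue via idempotents in the Burnside ring. Since $|G|\in R^\times$, the mark homomorphism $A(G)\otimes R\to\prod_{(H)}R$ is an isomorphism. So there are orthogonal idempotents $e_{(H)}$, one for each conjugacy class of subgroups, which split
\[
\Sp_{BG,R}\simeq\prod_{(H)}e_{(H)}\Sp_{BG,R}.
\]
Each factor consists of $R$-local $G$-spectra concentrated at the conjugacy class $(H)$. For these, $\Phi^H$ identifies the factor with $R$-local spectra carrying a Borel $W_GH$-action, that is, with $\Fun(BW_GH,\Sp_R)$. This is the standard isotropy separation argument: use $E\mathcal{F}[\not\supseteq H]$-type idempotent splittings and the equivalence $\Sp^{W_GH}$-free $\simeq$ Borel. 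On the other side,
\[
\Fun(\Orb_{BG}^\simeq,\Sp_R)\simeq\prod_{(H)}\Fun(BW_GH,\Sp_R),
\]
and $\Phi$ matches the two product decompositions factor by factor.

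For a slicker route I would use compact generators. $\Sp_{BG,R}$ is generated by the $R$-localized orbits $\Sigma^\infty_+G/K$. Their images under $\Phi$ are $H\mapsto R[(G/K)^H]$, which are compact in the target. Since $\Phi$ preserves colimits, it is enough to show full faithfulness on these generators: the map
\[
[G/K,G/L]^G_\ast\otimes R\to \prod_{(H)}\Hom_{W_GH}\bigl(R[(G/K)^H],R[(G/L)^H]\bigr)_\ast
\]
must be an isomorphism. In degree $0$ this is the tom Dieck splitting combined with the mark isomorphism for $|G|$ inverted. Higher rational homotopy vanishes on both sides, because the stable stems are torsion and $W_GH$-homotopy orbits have no higher rational homology once $|W_GH|$ is inverted. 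Essential surjectivity then follows because $\Phi$ hits a set of compact generators of the target, namely the regular modules $R[W_GH]$ placed at $(H)$, which arise as images of suitable idempotent pieces $e_{(H)}\Sigma^\infty_+G/H$.

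The main obstacle is this degree-wise calculation on generators, together with the essential surjectivity step. In the write-up I expect to simply cite the classical result of Greenlees–May and Wimmer at this point. The genuinely new input, naturality in $\Span(\bbG_\family)$, comes for free from \cref{prop:weylaction} and the objectwise criterion.
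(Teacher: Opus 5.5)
Your proposal is correct and follows the paper's proof. The paper likewise notes that the components of $\Phi$ are products of the functors $\Phi\colon\Sp_{BG,R}\to\Fun(\Orb_{BG}^\simeq,\Sp_R)$, and then cites the folklore result (Wimmer, Th.~3.10) that these are equivalences when $|G|\in R^\times$, which is exactly where you also plan to cite.

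One minor caution about your optional compact-generator sketch: higher homotopy vanishes on both sides only when $R=\Q$. For example, with $R=\Z[1/|G|]$ the groups $\pi_\ast\spherespectrum_R$ are nonzero in positive degrees. For general $R$ you should instead observe that, because $|G|$ is invertible, both sides are free $\pi_\ast\spherespectrum_R$-modules on a basis in degree $0$. Alternatively, rely on your idempotent-splitting argument, which works for any such $R$.
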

\begin{proof}
As described in \cref{prop:weylaction}, this natural transformation has as its components products of the functors
\[
\Phi\colon \Sp_{BG,R}\to\Fun(\Orb_{BG}^\simeq,\Sp_R),\qquad (\Phi E)(BH\hookrightarrow BG)  = \Phi^H E.
\]
It is folklore that this is an equivalence when $|G| \in R^\times$; see \cite[Th.3.10]{wimmerrationalmodel} for a proof.
\end{proof}

Write $\UCom_{\family,R}^\calN\subset\UCom_\family^\calN$ for the full subcategory of $\calN$-normed $\family$-global rings whose underlying $\family$-global spectrum is $R$-local. We may now prove our main theorem.

\begin{theorem}\label{thm:rationalucomm}
Let $\family$ be a family of finite groups and $R\subset\Q$ be a subring satisfying $|G| \in R^\times$ for all $G \in \family$. Then the functor $\Phi$ of \Cref{thm:geometricfpofucomm} restricts to an equivalence
\[
\UCom_{\family,R}^\calN\simeq \Fun(\Span(\Glo_\family,\mathcal E_\family,\calN),\CAlg_R).
\]
\end{theorem}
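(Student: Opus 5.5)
The plan is to apply partially lax limits to the equivalence of natural transformations in \cref{prop:wimmer}, and then identify both sides. Since $\Phi\colon \Sp_{\bullet,R}\Rightarrow\iota_\ast\ul{\Sp}_R^\otimes$ is a natural equivalence of functors $\Span(\bbG_\family)\to\Cat$, it restricts to a natural equivalence on the subcategory $\Span(\bbG_\family,\bbG_\family,\bbN)$. Partially lax limits are functorial in the diagram, so it induces an equivalence
\[
\underset{(\Span(\bbG_\family,\bbG_\family,\bbN),\bbO_\family^\op)}{\laxlim^\dagger}\Sp_{\bullet,R}\xrightarrow{\ \simeq\ }\underset{(\Span(\bbG_\family,\bbG_\family,\bbN),\bbO_\family^\op)}{\laxlim^\dagger}\iota_\ast\ul{\Sp}_R^\otimes.
\]
Concretely, a natural equivalence induces an equivalence of cocartesian unstraightenings over the base, and this preserves the sections that send marked edges to cocartesian edges.

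Next I identify the target. I apply \cref{prop:laxlimasfunctorcategory} with $\ccat=\Sp_R$, equipped with the localized symmetric monoidal structure. This identifies the target with $\Fun(\Span(\Glo_\family,\calE_\family,\calN),\CAlg(\Sp_R))$, and $\CAlg(\Sp_R)=\CAlg_R$ is the full subcategory of $R$-local $\E_\infty$ rings. Here $\iota_\ast\ul{\Sp}_R^\otimes$ should be read as the right Kan extension formed with $\ccat=\Sp_R$. It agrees with the subfunctor of $\iota_\ast\ul{\Sp}^\otimes$ spanned by pointwise $R$-local objects: by \cref{prop:right_kan_of_const} its values are $\Fun(\Orb_X^\simeq,\Sp_R)\subset\Fun(\Orb_X^\simeq,\Sp)$, and these inclusions are compatible with the restrictions and norms. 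The norms preserve $R$-locality because $R$-local spectra are closed under smash products.

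Then I identify the source. The partially lax limit of the subfunctor $\Sp_{\bullet,R}\subset\Sp_\bullet$ is the full subcategory of $\UCom_\family^\calN$ on sections whose value at every $X$ is $R$-local. It suffices to check connected $X=BG$, since $\Sp_{X\amalg Y}\simeq\Sp_X\times\Sp_Y$. This is exactly the condition that the underlying $\family$-global spectrum is $R$-local, i.e.\ it is $\UCom_{\family,R}^\calN$. Two points need care here. First, the full subcategory of sections landing pointwise in a subfunctor agrees with the partially lax limit of that subfunctor, because the unstraightening of the subfunctor is a full subcategory closed under the relevant cocartesian edges. Second, the restriction of $\laxlim^\dagger\Phi$ to $\UCom_{\family,R}^\calN$ agrees with the functor $\Phi$ of \Cref{thm:geometricfpofucomm}, because the equivalence of \cref{prop:laxlimasfunctorcategory} is natural in the symmetric monoidal category $\ccat$, here applied to the inclusion $\Sp_R\subset\Sp$.

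I expect the main obstacle to be this compatibility of full subcategories on both sides. On the target, the concern is that the equivalence of \cref{prop:laxlimasfunctorcategory} for $\Sp$ restricts, on pointwise $R$-local sections, to the one for $\Sp_R$. I would deduce this from naturality of each step of \cref{prop:laxlimrkan} and \cref{lm:calg_as_partiallylaxlimit} in $\ccat$. On the source, the concern is the identification with $\UCom_{\family,R}^\calN$ defined via the underlying global spectrum. This uses that values on non-connected groupoids are products of values on components, and that $R$-locality is detected on geometric fixed points. Once these are in place, the theorem follows by composing the three equivalences.
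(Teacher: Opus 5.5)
Your proposal is correct and follows the paper's own argument: apply $\laxlim^\dagger$ to the natural equivalence of \cref{prop:wimmer}, restricted to $\Span(\bbG_\family,\bbG_\family,\bbN)$, identify the source with $\UCom^\calN_{\family,R}$, and identify the target via \cref{prop:laxlimasfunctorcategory} with $\ccat=\Sp_R$, spelling out compatibilities that the paper treats as immediate. One minor caveat, which also applies to the paper's word ``subfunctor'': the empty norm along $\emptyset\to X$ gives the unit $\spherespectrum$, which is not $R$-local for $R\neq\Z$, so your claim that norms preserve $R$-locality fails there, and $\Sp_{\bullet,R}$ and $\iota_\ast\ul{\Sp}_R^\otimes$ are really localizations (norms followed by $R$-localization) rather than subfunctors; nothing breaks, because their unstraightenings are the full subcategories of $R$-local objects with the same cocartesian edges over the marked restrictions, which is all your argument uses.
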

\begin{proof}
It is immediate from the definitions that
\[
\UCom_{\family,R}^\calN\simeq\underset{\Span(\bbG_\calF,\bbG_\calF,\bbN),\bbO_\calF^\op)}{\laxlim^\dagger}\Sp_{\bullet,R}.
\]
The restriction of $\Phi$ to $R$-local objects may therefore be identified as the composite
\[
\underset{(\Span(\bbG_\calF,\bbG_\calF,\bbN),\bbO_\calF^\op)}{\laxlim^\dagger}\Sp_{\bullet,R}\to\underset{\Span(\bbG_\calF,\bbG_\calF,\bbN),\bbO_\calF^\op)}{\laxlim^\dagger}\iota_\ast\ul{\Sp}_R^\otimes \simeq\Fun(\Span(\Glo_\family,\mathcal E_\family,\calN),\CAlg_R),
\]
where the first map is obtained from the natural transformation $\Phi\colon \Sp_{\bullet,R}\to\iota_\ast\underline{\Sp}_R^\otimes$ and is an equivalence by \cref{prop:wimmer}, and the second equivalence is \cref{prop:laxlimasfunctorcategory}. Combining these equivalences proves the theorem.
\end{proof}

%%%%%%%%%%%%%%%%%%%%%%%%%%%%%%%%%%%%%%%%%%%%%%%%%%%%%%%%
%%%%%%%%%%%%%%%%%%%%%%%%%%%%%%%%%%%%%%%%%%%%%%%%%%%%%%%%
%%%%%%%%%%%%%%%%%%%%%%%%%%%%%%%%%%%%%%%%%%%%%%%%%%%%%%%%
\section{Variations on the theme}\label{sec:variations}
The strategy used to prove our main theorem may also be applied to identify rational objects in other equivariant categories. We illustrate this with two examples: a linear version for plain global spectra, and a fixed-$G$ version for $G$-commutative ring spectra.

Our first example is a variant of Barrero--Barthel--Pol--Strickland--Williamson's identification of rational $\family$-global spectra \cite[Th.D]{Glo_reps}; also see earlier work of Schwede \cite[Th.4.5.13]{s} and Wimmer \cite[Th.3.2.20]{wimmerthesis}.

\begin{theorem}\label{thm:rational_global_linear}
Geometric fixed points for $\family$-global spectra assemble into a symmetric monoidal functor
\[
\Phi\colon \Sp^\gl_\family\to\Fun(\mathcal E_\family^\op,\Sp).
\]
This functor restricts to an equivalence between full subcategories of $R$-local objects for any subring $R\subset\Q$ satisfying $|G| \in R^\times$ for all $G\in \family$.
\end{theorem}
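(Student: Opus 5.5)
We mirror the proof of \cref{thm:rationalucomm}, now with the partially lax limit over $(\Glo_\family^\op,\calO_\family^\op)$ in place of the span category.

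\textbf{Step 1: construct $\Phi$.} Restrict the natural transformation $\Phi\colon \Sp_\bullet\Rightarrow\iota_\ast\ul{\Sp}^\otimes$ of \cref{prop:weylaction} along $\Glo_\family^\op\subset\bbG_\family^\op\subset\Span(\bbG_\family)$, and apply $\laxlim^\dagger_{(\Glo_\family^\op,\calO_\family^\op)}$. This gives
\[
\Sp^\gl_\family=\underset{(\Glo_\family^\op,\calO_\family^\op)}{\laxlim^\dagger}\Sp_\bullet\longrightarrow\underset{(\Glo_\family^\op,\calO_\family^\op)}{\laxlim^\dagger}\iota_\ast\ul{\Sp}^\otimes .
\]
The restriction of $\iota_\ast\ul{\Sp}^\otimes$ to $\Glo_\family^\op$ should again be a right Kan extension, namely along $j\colon\calE_\family^\op\subset\Glo_\family^\op$ of the constant functor $\Sp$. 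This is exactly what the Beck--Chevalley step in the proof of \cref{prop:right_kan_of_const} says for the middle and left squares there.

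\textbf{Step 2: identify the target.} On $\Glo_\family^\op$ we have the factorization system $(\calO_\family^\op,\calE_\family^\op)$, coming from the (full, faithful) factorization of maps of connected groupoids. The subcategory $\calE_\family^\op$ contains the right class, and $(\calE_\family^\op,\calE_\family^\op\cap\calO_\family^\op,\calE_\family^\op)$ is a factorization system, since $\calE\cap\calO$ consists of the equivalences. \cref{prop:laxlimrkan} then gives
\[
\underset{(\Glo_\family^\op,\calO_\family^\op)}{\laxlim^\dagger}j_\ast\Sp\simeq\underset{(\calE_\family^\op,\simeq)}{\laxlim^\dagger}\Sp .
\]
Since all equivalences are marked, the right-hand side is the honest limit of a constant diagram, i.e.\ $\Fun(\calE_\family^\op,\Sp)$. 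On values this sends $E$ to $BG\mapsto\Phi^G E$, with inflation functoriality.

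\textbf{Step 3: symmetric monoidality.} $\Phi$ is a symmetric monoidal transformation between diagrams of symmetric monoidal categories, and partially lax limits are formed pointwise compatibly with the monoidal structures (as in \cite[Rk.5.1]{denissilluca}). Hence the induced functor is symmetric monoidal. On the target this is the pointwise smash product, which matches the limit description of Step 2 because the constant diagram is a diagram of symmetric monoidal categories.

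\textbf{Step 4: rational equivalence.} The $R$-local objects of $\Sp^\gl_\family$ are the partially lax limit of $\Sp_{\bullet,R}$, since locality is checked levelwise. By \cref{prop:wimmer}, $\Phi\colon\Sp_{\bullet,R}\to\iota_\ast\ul{\Sp}_R^\otimes$ is an equivalence of functors on $\Span(\bbG_\family)$, hence after restricting to $\Glo_\family^\op$ and taking $\laxlim^\dagger$. Repeating Step 2 with $\Sp_R$ identifies the target with $\Fun(\calE_\family^\op,\Sp_R)$, which is the full subcategory of $R$-local objects.

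\textbf{Main obstacle.} I expect the hardest part to be the Step 1 claim that restricting $\iota_\ast\ul{\Sp}^\otimes$ from $\Span(\bbG_\family)$ to $\Glo_\family^\op$ recovers $j_\ast$ of the constant functor. This needs the Beck--Chevalley equivalences from the proof of \cref{prop:right_kan_of_const}, and I would verify them square by square as there.
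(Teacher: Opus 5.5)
Your proposal is correct and is essentially the paper's proof. The paper applies $\laxlim^\dagger$ to the transformation of \cref{prop:weylaction}, identifies the target via \cref{prop:laxlimrkan} for the factorization system $(\Glo_\family^\op,\calO_\family^\op,\calE_\family^\op)$ and the subcategory $\calE_\family^\op$, and gets the $R$-local equivalence from \cref{prop:wimmer}. Your Step 1 (that $\iota_\ast\ul{\Sp}^\otimes$ restricted to $\Glo_\family^\op$ is the right Kan extension from $\calE_\family^\op$) and your Step 3 make explicit what the paper leaves implicit.

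There are two small slips to fix.
\begin{itemize}
\item The Beck--Chevalley squares you need in Step 1 are the rightmost and middle squares in the proof of \cref{prop:right_kan_of_const}, not the middle and left ones.
\item Marking only the equivalences imposes no condition, so $\laxlim^\dagger_{(\calE_\family^\op,\simeq)}\Sp$ is the \emph{fully lax} limit. For a constant diagram this is $\Fun(\calE_\family^\op,\Sp)$. The honest limit would instead force all inflations to be invertible, and would collapse to $\Sp$ because $\calE_\family^\op$ has an initial object.
\end{itemize}
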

\begin{proof}
This functor is obtained as a composite
\[
\Sp^\gl_\family = \underset{(\Glo_\family^\op,\calO_\family^\op)}{\laxlim^\dagger}\Sp_\bullet \xrightarrow{\laxlim^\dagger\Phi}\underset{(\Glo_\family^\op,\calO_\family^\op)}{\laxlim^\dagger}\iota_\ast\ul{\Sp}^\otimes \simeq \Fun(\mathcal E_\family^\op,\Sp).
\]
Here, the first functor is obtained from the natural transformation of \cref{prop:weylaction}, and is therefore an equivalence when restricted to $R$-local objects by \cref{prop:wimmer}. The latter equivalence is a consequence of \cref{prop:laxlimrkan} applied to the factorization system $(\Glo_\family^\op,\mathcal O_\family^\op,\calE_\family^\op)$ and the subcategory $\calE_\family^\op \subset\Glo_\family^\op$, which implies
\[
\underset{(\Glo_\family^\op,\calO_\family^\op)}{\laxlim^\dagger}\iota_\ast\ul{\Sp}^\otimes \simeq \underset{(\calE_\family^\op,(\Glo_\family^\op)^\simeq)}{\laxlim^\dagger}\ul{\Sp}^\otimes\simeq \underset{\calE_\family^\op}{\laxlim}\Sp\simeq \Fun(\mathcal E_\family^\op,\Sp),
\]
as $\ul{\Sp}^\otimes$ restricts to the constant diagram over $\calE_\family^\op$.
\end{proof}

Our second example requires a final set of definitions. As $\bbF_X$ is equivalent to $\bbO_{/X}$, the projection defines a canonical pullback-preserving functor $\bbF_X\simeq\bbO_{/X}\to\bbG$. In particular, we may restrict $\Sp_\bullet$ to a functor defined on $\Span(\bbF_X)$.

\begin{mydef}[{\cite[Df.1.2, Pr.3.6]{blumberghill2018incomplete}}]
Let $X$ be a finite groupoid. An \emph{indexing system} on $X$ is a wide subcategory $\calI\subset\calO_X$ whose finite coproduct completion $\bbI\subset\bbF_X$ is stable under base change.
\end{mydef}

\begin{mydef}
The category of \emph{$\calI$-normed $X$-ring spectra} is the partially lax limit
\[
\Comm_X^\calI \coloneqq \underset{(\Span(\bbF_X,\bbF_X,\bbI),\bbF_X^\op)}{\laxlim^\dagger}\Sp_\bullet.
\]
\end{mydef}

\begin{ex}
When $X = BG$ and $\calI = \calO_{BG}$ is the maximal indexing system, $\Comm^G \coloneqq \Comm_{BG}^{\calO_{BG}}$ is the category of \emph{$G$-commutative rings} or \emph{normed $G$-ring spectra}. By \cite[Th.A]{LLPNorms}, $\Comm^G$ is equivalent to the underlying $\infty$-category of Hill--Hopkins--Ravenel's homotopy theory of strictly $G$-commutative ring spectra \cite{hhr}.
\end{ex}

\begin{ex}
When $\calI = \calO_X^\simeq$ is the minimal indexing system, $\Comm_X^{\calI}\simeq\CAlg(\Sp_X)$ is equivalent to the category of $\E_\infty$ rings in $\Sp_X$; the proof is identical to that of \cref{lem:nonorms}.
\end{ex}

Our methods can be used to establish a generalization of Wimmer's identification of rational $G$-commutative rings \cite[Cor.1.4]{wimmerrationalmodel}. This generalization has also been established independently by Tigilauri \cite{tigilauri2026modelnormedalgebrasrational}. First, we need a lemma. Write $\bbF_X^\nabla\subset\bbF_X$ for the class of fold maps.

\begin{lemma}\label{lem:bc}
Let $\calI$ be an indexing system on a finite groupoid $X$, and consider the diagram
\begin{center}\begin{tikzcd}[column sep=tiny, row sep=small]
&(\Orb_X^\simeq)^\op\ar[rr]\ar[dd]\ar[dl]\ar[dr]&&(\bbF_X^\nabla)^\op\ar[dd]\ar[dr]\ar[rr]&&\Span(\bbF_X,\bbF_X^\nabla,\bbI)\ar[dd]\ar[dr]\\
(\Glo^\simeq)^\op\ar[rr]\ar[dd]&&\calE^\op\ar[dd]\ar[rr]&&\bbE^\op\ar[rr]\ar[dd]&&\Span(\bbG,\bbE,\bbG)\ar[dd]\\
&\Orb_X^\op\ar[rr]\ar[dl]\ar[dr]&&\bbF_X^\op\ar[dr]\ar[rr]&&\Span(\bbF_X,\bbF_X,\bbI)\ar[dr]\\
\calO^\op\ar[rr]&&\Glo^\op\ar[rr]&&\bbG^\op\ar[rr]&&\Span(\bbG)
\end{tikzcd},\end{center}
For any product-preserving functor $F$ on $\Span(\bbG,\bbE,\bbG)$, the Beck--Chevalley transformations relating the vertical right Kan extensions of its restrictions are all equivalences.
\end{lemma}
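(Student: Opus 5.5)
The diagram is a cube-like prism: a back row over $X$ and a front row over $\Glo$/$\bbG$, connected by diagonal maps. I will check Beck--Chevalley for each vertical square. The front row is exactly the diagram in the proof of \cref{prop:right_kan_of_const} with $\family=\fin$ and $\bbR=\bbG$, so its three squares are settled by the argument given there. It remains to handle the three back squares, and to check the slanted faces well enough to identify $\Orb_X^\simeq$ as the relevant fibre.

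For the back row I will redo the proof of \cref{prop:right_kan_of_const} in the relative setting, with $\bbF_X\simeq\bbO_{/X}$ in place of $\bbG$. The pairs are $(\bbF_X^\nabla,\bbF_X)$, $(\Orb_X^\simeq,\Orb_X)$ and $(\bbF_X, \bbI)$.

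For the rightmost back square I will apply \cite[Lm.4.1.6]{CHLLBispans} to the inclusion of span categories
\[
(\Span(\bbF_X,\bbF_X^\nabla,\bbI),(\bbF_X^\nabla)^\op,\bbI)\subset(\Span(\bbF_X,\bbF_X,\bbI),\bbF_X^\op,\bbI).
\]
Its hypotheses hold because fold maps are closed under base change along arbitrary maps in $\bbF_X$. In particular they are closed under base change along maps in $\bbI$, which is stable under base change by the definition of an indexing system.

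For the middle back square, $\bbF_X^\nabla$ and $\bbF_X$ are the finite coproduct completions of $\Orb_X^\simeq$ and $\Orb_X$. So $(\bbF_X^\nabla)^\op\to\bbF_X^\op$ is the finite product completion of $(\Orb_X^\simeq)^\op\to\Orb_X^\op$. Since $F$ is product preserving, restricting it along any of these maps gives a product preserving functor. The Beck--Chevalley map is then an equivalence, exactly as in the middle square of \cref{prop:right_kan_of_const}.

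The leftmost back square has nothing to check, since its left edge is the identity on $(\Orb_X^\simeq)^\op$.

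Next I treat the slanted faces, which is what lets the vertical Kan extensions be compared. The comparison map $\Orb_X\to\calO$ is the forgetful map $\calO\times_\bbG\bbO_{/X}\to\calO$, a right fibration. For the back-left face, the pointwise formula for $i_\ast$ at $M\to X$ is a limit over $(\Glo^\simeq)^\op\times_{\calO^\op}(\calO_{/M})^\op$. This comma category is equivalent to $(\Orb_M^\simeq)^\op\simeq(\Orb_X^\simeq)^\op\times_{\Orb_X^\op}((\Orb_X)_{/M})^\op$. Hence the Beck--Chevalley map for this face is an equivalence, by the dual of \cite[Pr.4.1.4]{CHLLBispans} applied to the right fibration.

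For the remaining slanted faces I will use the following argument. All functors in sight are finite-product preserving, and the right Kan extensions preserve finite products, as shown in the proof of \cref{prop:right_kan_of_const}. So each comparison reduces to evaluation on connected objects, where it is the left-face statement just proved. The comparisons for the other columns then follow by pasting Beck--Chevalley squares, since composites of Beck--Chevalley equivalences are equivalences.

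The main obstacle is the rightmost back square. Here one must verify the precise hypotheses of \cite[Lm.4.1.6]{CHLLBispans}, namely that the wide subcategories form compatible factorization systems and are closed under the needed pullbacks. One must also confirm that $\Span(\bbF_X,\bbF_X,\bbI)\to\Span(\bbG)$ is induced by a pullback-preserving functor $\bbF_X\to\bbG$ sending $\bbI$ into $\bbG$ and fold maps into $\bbE$. The first point can be settled on the nose. The second holds because $\bbO_{/X}\to\bbG$ preserves pullbacks, and fold maps are full.
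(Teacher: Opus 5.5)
Your proposal is correct and follows the paper's own proof. The front and back vertical faces are handled by the argument of \cref{prop:right_kan_of_const}. The diagonal faces reduce, via product preservation and pasting of Beck--Chevalley squares, to the leftmost diagonal face, which is settled by the pointwise formula and the equivalence $(\Orb_X^\simeq)\times_{\Orb_X}(\Orb_X)_{/M}\simeq\Glo^\simeq\times_{\calO}\calO_{/M}$, coming from $\Orb_X\to\calO$ being a right fibration.

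For that face, the comma-category comparison you write down is already the whole argument, so the additional appeal to \cite[Pr.4.1.4]{CHLLBispans} is not needed.
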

\begin{proof}
For the front faces, this was seen in the proof of \cref{prop:right_kan_of_const}, and the same argument applies to the back faces. This reduces us to showing that the Beck--Chevalley transformation associated to the leftmost diagonal face is an equivalence, which follows from the pointwise formula for Kan extensions as the comparison $(\Orb_X^\simeq)\times_{\Orb_X}((\Orb_X)/Y) \to \Glo^\simeq\times_{\calO}\calO_{/Y}$ is an equivalence for any $Y \in \Orb_X$.
\end{proof}

\begin{theorem}\label{thm:ultracommutative_rational_Xrings}
Let $X$ be a finite groupoid and $\calI$ be an indexing system on $X$. Then the geometric fixed points of $\calI$-normed $X$-ring spectra assemble into a functor
\[
\Phi\colon \Comm_X^\calI \to \Fun(\calI,\CAlg).
\]
This functor restricts to an equivalence between full subcategories of $R$-local objects for any subring $R\subset \Q$ satisfying $|\pi_1(X,x)| \in R^\times$ for all $x\in X$.
\end{theorem}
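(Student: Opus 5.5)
The plan is to follow the template of \cref{thm:rationalucomm}, with $\Span(\bbG_\family,\bbG_\family,\bbN)$ replaced by $\Span(\bbF_X,\bbF_X,\bbI)$. We first restrict the natural transformation $\Phi\colon \Sp_\bullet\Rightarrow\iota_\ast\ul{\Sp}^\otimes$ of \cref{prop:weylaction} along the pullback-preserving functor $\Span(\bbF_X,\bbF_X,\bbI)\to\Span(\bbG)$. Applying $\laxlim^\dagger$ over the marked category $(\Span(\bbF_X,\bbF_X,\bbI),\bbF_X^\op)$ then gives
\[
\Comm_X^\calI \to \underset{(\Span(\bbF_X,\bbF_X,\bbI),\bbF_X^\op)}{\laxlim^\dagger}(\iota_\ast\ul{\Sp}^\otimes)|_{\Span(\bbF_X,\bbF_X,\bbI)}.
\]
For the rational statement, note that $R$-locality is detected by geometric fixed points, so $R$-local objects form a subfunctor and the restriction of $\Phi$ to them is again a transformation. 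Every object of $\bbF_X$ has isotropy that is a subgroup of some $\pi_1(X,x)$, so its order is invertible in $R$. By \cref{prop:wimmer}, applied with $\family$ the family of subgroups of the $\pi_1(X,x)$, the transformation is therefore a levelwise equivalence on $R$-local objects. It follows that the induced functor on partially lax limits restricts to an equivalence between the full subcategory of $R$-local objects in $\Comm_X^\calI$ and the corresponding lax limit built from $\ul{\Sp}_R^\otimes$.

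The substantive step is to identify the target. We use \cref{lem:bc} to commute restriction with right Kan extension. Restricting $\iota_\ast\ul{\ccat}^\otimes$ from $\Span(\bbG)$ to $\Span(\bbF_X,\bbF_X,\bbI)$ agrees with right Kan extending the restriction of $\ul{\ccat}^\otimes$ to $\Span(\bbF_X,\bbF_X^\nabla,\bbI)$ along the inclusion $j\colon\Span(\bbF_X,\bbF_X^\nabla,\bbI)\to\Span(\bbF_X,\bbF_X,\bbI)$. The back face of \cref{lem:bc} is exactly the needed Beck--Chevalley statement, applied to the product-preserving functor $\ul{\ccat}^\otimes$ on $\Span(\bbG,\bbE,\bbG)$.

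Next we apply \cref{prop:laxlimrkan} to the inclusion $j$. On $\bbF_X\simeq\bbO_{/X}$, every map lies in $\bbO$, so the relevant factorization system on $\Span(\bbF_X,\bbF_X,\bbI)$ has left class $\bbF_X^\op$ (all backwards maps) and right class $\bbI$ (forward maps). This right class is contained in $\Span(\bbF_X,\bbF_X^\nabla,\bbI)$, and there it forms a factorization system with left class $(\bbF_X^\nabla)^\op$. Checking these two factorization-system claims is routine: any span $A\leftarrow M\to B$ factors as a backwards map followed by a forwards map, and when the backwards map is a fold map the factorization stays inside the subcategory.

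Hence the target becomes $\laxlim^\dagger_{(\Span(\bbF_X,\bbF_X^\nabla,\bbI),(\bbF_X^\nabla)^\op)}\ul{\ccat}^\otimes$. Since $\bbF_X$ is the finite coproduct completion of $\Orb_X$, the completion of $(\Orb_X,\Orb_X^\simeq,\calI)$ is $(\bbF_X,\bbF_X^\nabla,\bbI)$. Thus \cref{prop:spancocartoperad} identifies $\Span(\bbF_X,\bbF_X^\nabla,\bbI)$ as the cocartesian operad on $\Span(\Orb_X,\Orb_X^\simeq,\calI)\simeq\calI$, with inerts the backwards fold maps. Here we use that spans with invertible backward leg are just maps of $\calI$. \Cref{lm:calg_as_partiallylaxlimit} then gives $\Fun(\calI,\CAlg(\ccat))$; taking $\ccat=\Sp$ and $\ccat=\Sp_R$ completes both parts of the statement.

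The main obstacle I expect is bookkeeping rather than new mathematics. One must verify that the square relating $\Ar_R$ for the two span categories is cartesian, which is the hypothesis of \cref{prop:laxlimrkan}. One must also check that the composite $\Span(\bbF_X,\bbF_X,\bbI)\to\Span(\bbG)$ genuinely lands in $\Span(\bbG)$, which holds because $\bbF_X\to\bbG$ preserves pullbacks and sends $\bbI$ into faithful maps. Finally, the identification of the value $\Phi E$ as $\Phi^H E$ with norms along $\calI$ should follow from the description of $\Phi$ in \cref{prop:weylaction}.
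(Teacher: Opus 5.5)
Your proposal is correct and follows the paper's proof essentially step for step. You restrict $\Phi$ along $\Span(\bbF_X,\bbF_X,\bbI)\to\Span(\bbG)$ and invoke \cref{prop:wimmer} for the $R$-local equivalence, then identify the target via \cref{lem:bc}, \cref{prop:laxlimrkan} for the factorization system $(\bbF_X^\op,\bbI)$, and \cref{lm:calg_as_partiallylaxlimit} with \cref{prop:spancocartoperad}, ending with $\Span(\Orb_X,\Orb_X^\simeq,\calI)\simeq\calI$. Two small labeling slips: the Beck--Chevalley square you need is the rightmost diagonal face of \cref{lem:bc}, joining the $\Span(\bbF_X,\dots)$ row to the $\Span(\bbG,\dots)$ row, not a back face; and the cartesian square of arrow categories is something \cref{prop:laxlimrkan} deduces from its factorization-system hypothesis, not the hypothesis itself.
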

\begin{proof}
This functor is obtained as a composite
\[
\Comm_X^\calI = \underset{(\Span(\bbF_X,\bbF_X,\bbI),\bbF_X^\op)}{\laxlim^\dagger}\Sp_\bullet
\xrightarrow{\laxlim^\dagger\Phi}
\underset{(\Span(\bbF_X,\bbF_X,\bbI),\bbF_X^\op)}{\laxlim^\dagger}\iota_\ast\ul{\Sp}^\otimes \simeq \Fun(\calI,\CAlg).
\]
Here, the first functor is obtained from the natural transformation of \cref{prop:weylaction}, and is therefore an equivalence when restricted to $R$-local objects by \cref{prop:wimmer}.

The final equivalence is obtained as follows. By \cref{lem:bc}, we may identify $\iota_\ast\ul{\Sp}^\otimes$ restricted to $\Span(\bbF_X,\bbF_X,\bbI)$ as the corresponding right Kan extension of $\ul{\Sp}^\otimes$ restricted to $\Span(\bbF_X,\bbF_X^\nabla,\bbI)$. Consider the identifications
\[
\underset{(\Span(\bbF_X,\bbF_X,\bbI),\bbF_X^\op)}{\laxlim^\dagger}\iota_\ast\ul{\Sp}^\otimes\simeq\underset{(\Span(\bbF_X,\bbF_X^\nabla,\bbI),(\bbF_X^\nabla)^\op)}{\laxlim^\dagger}\ul{\Sp}^\otimes \simeq \Fun(\Span(\Orb_X,\Orb_X^\simeq,\calI),\CAlg),
\]
the first following from \cref{prop:laxlimrkan} and the second by a combination of \cref{lm:calg_as_partiallylaxlimit} together with \cref{prop:spancocartoperad}. We conclude as $\Span(\Orb_X,\Orb_X^\simeq,\calI) \simeq \calI$.
\end{proof}

\addcontentsline{toc}{section}{References}
\scriptsize
\bibliography{reference}
\bibliographystyle{alpha}
\end{document}